\def\bjga#1{\def\thefootnote{}}
\def\mymaketitle#1{\date{\bjga{#1}}
\begin{document}\maketitle\thispagestyle{empty}}
\def\babs{
    \begin{abstractv}}
\def\eabs{\end{abstractv}}

\usepackage{graphicx,caption,subcaption}

%
\theoremstyle{definition}
\newtheorem{definition}{Definition}[section]
\newtheorem{example}[definition]{Example}
\newtheorem{remark}[definition]{Remark}
\newtheorem{problem}[definition]{Problem}
\theoremstyle{theorem}
\newtheorem{theorem}{Theorem}[section]
\newtheorem{proposition}[theorem]{Proposition}
\newtheorem{corollary}[theorem]{Corollary}
\newtheorem{lemma}[theorem]{Lemma}
\def\pas{\par\smallskip}
\def\pasn{\par\smallskip\noindent}
\def\pam{\par\medskip}
\def\pamn{\par\medskip\noindent}
\def\pab{\par\bigskip}
\def\header#1#2#3#4#5{
    \markboth{#3}{#4}
    \setcounter{page}{#1}
    \title{#5}
    \author{#3}
    \date{}
    \mymaketitle{#1-#2}}
\newenvironment{abstractv}{\begin{quote}{\bf Abstract.\ }}{\end{quote}}
\def\msc{{\bf M.S.C. 2010}:\ }
\def\kwd{\\{\bf Key words}:\ }
\def\aua{\par\noindent{\em Author's address:}\pam\noindent}
\def\auas{\par\noindent{\em Authors' addresses:}\pam\noindent}
\def\auac{\par\noindent{\em Authors' address:}\pam\noindent}
\def\bece{\begin{center}}
\def\eece{\end{center}}
\def\bebi{}
\def\bibi#1{\bibitem{#1}}
\newcommand{\R}{\mathbb{R}}
\newcommand{\Rt}{\mbox{{\em $\R$}}}
\newcommand{\Rs}{\mbox{\tiny{\R}}}
\newcommand{\C}{\mathbb{C}}
\newcommand{\Cs}{\mbox{\tiny{\C}}}
\newcommand{\Q}{\mathbb{Q}}
\newcommand{\Z}{\mathbb{Z}}
\newcommand{\Zs}{\mbox{\tiny{\Z}}}
\newcommand{\N}{\mathbb{N}}
\def\Ne{\mbox{\em $\N$}}
\newcommand{\Ns}{\mbox{\tiny{\N}}}
\newcommand{\Cbar}{\bar{\mathbf{C}}}
\def\DD{\;\mbox{D}\!\!\!\!\!\!\mbox{I}\;\;\,}
\def\DDs{\mbox{{\scriptsize$\;\mbox{D}\!\!\!\!\!\!\mbox{I}\;\;\,$}}}
\def\noi{\noindent}
\def\qq{\qquad}
\def\mm{\medskip\\}
\def\lg{\langle}
\def\rg{\rangle}
\def\ra{\Righarrow}
\def\lra{\Leftrightarrow}
\def\ri{\rightarrow}
\def\fall{\mbox{ for all }}
\def\di{\displaystyle}
\def\vp{\varphi}
\def\al{\alpha}
\def\ol#1{\overline{#1}}
\def\qed{\hfill$\Box$}
\def\bref#1{(\ref{#1})}
\def\midd{\hspace*{-10pt}\left.\phantom{\di\int}\right|}
\def\text#1{\mbox{#1}}
\def\emph#1{{\em #1}}
\def\textit#1{{\em #1}}
\def\textbf#1{{\bf #1}}
\def\func#1{\mathop{\rm #1}}
\def\limfunc#1{\mathop{\rm #1}}
\def\dint{\displaystyle\int}
\def\dsum{\displaystyle\sum}
\def\dfrac{\displaystyle\frac}
\def\Bbb#1{\mathbb{#1}}
\def\bu{$\bullet$\ }
\def\sta{$\star$\ }
\def\ii{\,\mbox{i}\,}
\def\lb{\linebreak}
\def\pr{{}^{\prime}}
\def\imath{\mbox{i}}
\def\ba{\begin{array}}
\def\ea{\end{array}}
\def\beq{\begin{equation}}
\def\eeq{\end{equation}}
\def\zx#1{\begin{equation}\label{#1}}
\def\zc{\end{equation}}
\def\aru#1{\left\{{\begin{array}{l}#1\end{array}}\right.}
\def\matd#1{\left(\begin{array}{cc}#1\end{array}\right)}
\def\shw{\scriptscriptstyle}
\def\stkdn#1#2{{\mathop{#2}\limits^{}_{#1}}{}}
\def\dn#1#2{\stkdn{{\shw #1}}{#2}{}}
\def\stkup#1#2{{\mathop{#1}\limits^{#2}_{}}{}}
\def\up#1#2{\stkup{#1}{\shw #2}{}}
\def\stkud#1#2#3{{\mathop{#2}\limits^{#3}_{#1}}{}}
\def\ud#1#2#3{\stkud{\shw #1}{#2}{\shw #3}}
\def\stackunder#1#2{\mathrel{\mathop{#2}\limits_{#1}}}
\def\emptyset{\begin{picture}(13,10) \unitlength1pt
    \put(5,3){\circle{7}}\put(.7,-1.8){\line(1,1){10}}\end{picture}}
\def\fiu#1#2#3{\begin{center}                                   
    \includegraphics[scale=#1]{#2.eps}\nopagebreak\\\parbox{10cm}
    {\begin{center}\small\bf #3\end{center}}\end{center} }
\def\fid#1#2#3#4{\begin{center}                                 
    \includegraphics[scale=#1]{#2.eps}\hspace*{.7cm}
    \includegraphics[scale=#1]{#3.eps}
    \nopagebreak\\\parbox{14cm}{\begin{center}
    \small\bf #4\end{center}}\end{center} }
\def\fit#1#2#3#4#5{\begin{center}\begin{tabular}{ccc}
    \includegraphics[scale=#1]{#2.eps} &\hspace*{.4cm}&
    \includegraphics[scale=#1]{#3.eps}\\
    {\small\bf #4}&&{\small\bf #5}\end{tabular}\end{center}}
\pagestyle{myheadings}
\setlength{\textheight}{20cm}
\setlength{\textwidth}{13cm}
\setlength{\oddsidemargin}{18mm}
\setlength{\evensidemargin}{18mm}
\renewcommand{\theequation}{\thesection.\arabic{equation}}
\makeatletter \@addtoreset{equation}{section} \makeatother

\newcommand{\re}{\operatorname{Re}}
\newcommand{\im}{\operatorname{Im}}
\newcommand{\hot}{\operatorname{h.o.t.}}
\newcommand{\holo}{\operatorname{holomorphic}}

\header{1}{20}{P. Connor}       
    {Harmonic parametrizations of surfaces of arbitrary genus} 
    {Harmonic parametrizations of surfaces of \\arbitrary genus}  
%
%
\babs
    The Weierstrass representation for minimal surfaces in $\R^3$ provides a flexible method for constructing minimal surfaces of arbitrary genus.  The topological limitations of minimal surfaces interfere with this providing a more general geometric modeling tool.  Minimal surfaces lie in the larger class of harmonic surfaces, which in general don't have the same topological limitations of minimal surfaces and can have complicated embedded ends.  In this paper we demonstrate the flexibility of using the Weierstrass representation for harmonic surfaces to combine embedded harmonic ends together to construct embedded harmonic surfaces of arbitrary genus.
\eabs
\msc
    53A10, 49Q05, 53C42.       
\kwd
    harmonic surface; minimal surface.  
%
\section{Introduction}
    In \cite{we9}, Weber outlined a method for constructing surfaces in $\R^3$ whose parametrization is given by harmonic coordinate functions.  The most well known surfaces of this type are minimal surfaces.  As shown in \cite{clw1}, there is a vast array of embedded harmonic ends, which can be combined together in surprising ways to construct embedded surfaces.  

We call a surface {\it harmonic} if it is possible to find a parametrization whose coordinate functions are harmonic.  Many harmonic surfaces are graphs over the xy-plane, for example the surface in figure \ref{figure:g0(2,2,4)} with parametrization $f(x,y)=(x,y,x^3/3-xy^2)$.
\begin{figure}[h]
	\centerline{ 
		\includegraphics[height=2.5in]{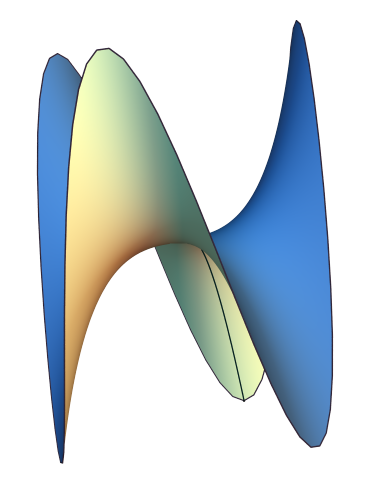}
			}
	\caption{Harmonic graph over the xy-plane}
	\label{figure:g0(2,2,4)}
\end{figure}
  Among the most well known harmonic surfaces are minimal surfaces, with the Weierstrass representation for a minimal surface providing a harmonic parametrization.  This method concocts the coordinate functions of the parametrization by taking the real part of the integral of meromorphic one-forms.  
  
In the case of minimal surfaces, there is a well-defined limiting normal vector at each end.  In \cite{ka6}, Karcher illustrated a method of constructing minimal surfaces with prescribed geometry, utilizing the Weierstrass representation and the nice behavior of the normal vector.  Harmonic surfaces, in general, don't have a well-defined limiting normal vector at each end.  However, the Weierstrass representation still offers a convenient way to construct harmonic surfaces, and the usual obstacle of solving the period problem isn't an issue with harmonic surfaces.
  

There are interesting applications of harmonic surfaces to the field of computer aided geometric design.  In \cite{mu1}, Monterde and Ugail used harmonic B\'e{}zier surfaces to generate surfaces with prescribed boundaries.  Harmonic surfaces provide a rich source of easy to construct non-positively curved surfaces that can be used to solve free boundary value problems. 

There are many different types of embedded harmonic ends which can be combined in different combinations to construct embedded surfaces with desired end behavior.  These ends can be expressed using the Weierstrass representation.  In turn, the Weierstrass representations of a group of ends can be easily combined to construct the Weierstrass representation of a surface with that group of ends.

Additionally, it is possible to construct harmonic surfaces of arbitrary genus.  Other than keeping the surface embedded, there is no restriction on the location and size of the handles.  For example, there is only a genus zero minimal catenoid.  Whereas one can construct harmonic catenoids of arbitrary genus, with no restrictions on the location of the handles.  This provides a great deal of flexibility to design non-positively curved surfaces of any genus.

There are numerous modeling applications of minimal surfaces in fields such as architecture, aviation, biology, and nano structures \cite{gdm,se1,su1,wa1}, and any minimal surface can be approximated by a harmonic surface.  Scherk's doubly periodic minimal surface, for example, can be modified by adding a variety of handles with different locations and sizes.  The resulting harmonic surface won't necessarily remain a minimal surface, but it can be viewed as an approximation of a minimal surface, with the ends asymptotic to minimal ends.  Harmonic surfaces provide flexible approximations of minimal surfaces that can be used in any field that utilizes minimal surfaces.

This paper demonstrates how to construct examples of harmonic surfaces with prescribed geometry.  In section \ref{sec2}, we discuss some background information and terms that will be used in the paper.  In section \ref{sec3}, we construct an embedded genus zero surface with a finite number of parallel planes by gluing planes between the top and bottom ends of a catenoid.  In section \ref{sec4}, we illustrate how to construct examples of arbitrary genus with two orthogonal symmetry planes.  In section \ref{sec5}, we show the flexibility of harmonic surfaces by adding handles in different locations to Scherk's doubly periodic surface.  In section \ref{sec6}, we illustrate how to construct harmonic tori with no restrictions on the placement of the handle.
%
%
\section{Background information}
\label{sec2}
The study of harmonic immersions in $\R^3$ began in the 1960's with a series of papers by Klotz \cite{kl2,kl3,kl4, kl5,kl1} that studied the Gauss map of harmonic immersions and its quasiconformal properties.  Alarc\'o{}n and L\'o{}pez \cite{al1}  continued the development of the global theory of harmonic immersions, proving that a complete harmonic immersion in $\R^3$ has finite  $L^2$ norm of the shape operator ($\int |S|^2\, dA <\infty$) if and only if the domain of the surface is conformally a compact Riemann surface with finitely many punctures and the Gauss map extends continuously to the punctures.  

Let $X$ be a compact Riemann surface, $X'=X-\{p_1,\ldots,p_m\}$, and $\omega_1,\omega_2, \omega_3$ meromorphic one-forms on $X$ that are holomorphic on $X'$.  If 
\[
\re\int_{\gamma}\left(\omega_1,\omega_2,\omega_3\right)=(0,0,0)
\]
where $\gamma$ is any closed curve in $X'$ (period problem), then 
\[
f(z)=\re\int^z(\omega_1,\omega_2,\omega_3)
\]
defines a {\it harmonic map} from $X'$ into $\R^3$ with ends at $p_1,\ldots,p_m$.  It's image $M=f(X')$ is called a {\it harmonic surface}, and the pair $\left(X',(\omega_1,\omega_2,\omega_2)\right)$ is referred to as the {\it Weierstrass representation} of $M$.  If one also requires that 
\[
\omega_1^2+\omega_2^2+\omega_3^2=0
\]
then $f$ is a conformal map, and the image $f(X')$ is a minimal surface.

Let $n^j=\text{max}\{n_1^j,n_2^j,n_3^j\}$, where $n_i^j$ is order of the pole of $\omega_i$ at $p_j$.  Then $n^j$ is the {\it order} of the end at $p_j$.  Note that ends with poles of different orders - for example $(0,1,2)$ and $(2,0,1)$ - may differ by a real affine transformation.  In this case, we say the ends are {\it affinely equivalent}.  Note that $n^j$ is the same for all affinely equivalent ends.

An end is in {\it reduced form} if the pole orders $n_k^j$ of $\omega_k$ at $p_j$ satisfy $n_1^j\leq n_2^j\leq n_3^j$ and $\left(n_1^j,n_2^j,n_3^j\right)$ is minimal in lexicographic ordering among all affinely equivalent ends.  The {\it type} of an end at $p_j$ is the tuple $\left(n_1^j,n_2^j,n_3^j\right)$ of an affinely equivalent end in reduced form.  For example, the Weierstrass data
\[
\left(\omega_1,\omega_2,\omega_3\right)=\left(\frac{1}{z^2},i,\frac{1}{z}\right)dz
\]
and
\[
\left(\omega_1,\omega_2,\omega_3\right)=\left(\frac{1}{z^2},\frac{1}{z^2}+i,\frac{1}{z}\right)dz
\]
both have ends at $z=0$ and $z=\infty$, of type $(0,1,2)$.  See figures \ref{figure:(0,1,2)a} and \ref{figure:(0,1,2)b}.   The Weierstrass data 
\[
\left(\omega_1,\omega_2,\omega_3\right)=\left(\frac{1}{z^2},\frac{i}{z^2}+i,\frac{1}{z}\right)dz
\]

has an end at $z=0$ of type $(1,2,2)$ and an end at $z=\infty$ of type $(0,1,2)$.  See figure \ref{figure:(0,1,2)c}.
\begin{figure}[t]
    \centering
    \begin{subfigure}[b]{0.29\textwidth}
       \centering
        \includegraphics[width=.7\textwidth]{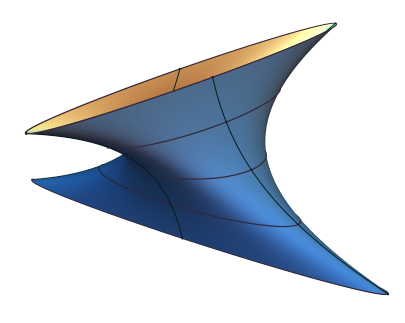}
        \caption{Ends of type $(0,1,2)$}
        \label{figure:(0,1,2)a}
    \end{subfigure}
    ~ 
    \begin{subfigure}[b]{0.31\textwidth}
    	    \centering
        \includegraphics[width=1.1\textwidth]{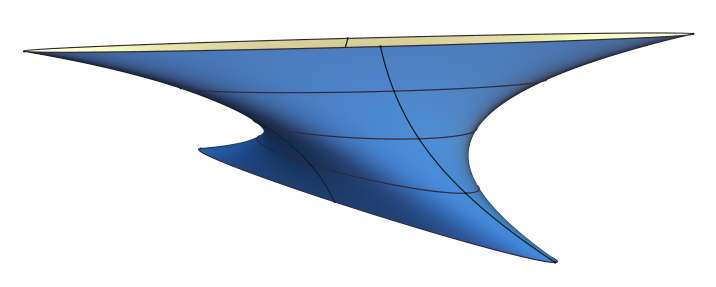}
        \caption{Ends of type $(0,1,2)$}
        \label{figure:(0,1,2)b}
    \end{subfigure}
        ~ 
    \begin{subfigure}[b]{0.35\textwidth}
    	    \centering
        \includegraphics[width=.9\textwidth]{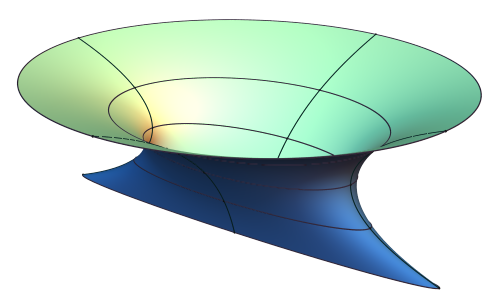}
        \caption{Ends of type $(0,1,2), (1,2,2)$}
        \label{figure:(0,1,2)c}
    \end{subfigure}
    \caption{}\label{figure:(0,1,2)}
\end{figure}

It was proven in \cite{clw2} that if $f$ is an immersion then the total Gauss curvature of $M$ is 
\[
\int_{X'} KdA=2\pi\chi(X)-2\pi\sum_{j=1}^m(n^j-1),
\]
where $\chi(X)$ is the Euler characteristic of $X$.  

The Gauss map $N:X\rightarrow S^2$ is given by 
\[
N(z)=\frac{\im\left(\omega_2(z)\overline{\omega_3(z)},\omega_3(z)\overline{\omega_1(z)},\omega_1(z)\overline{\omega_2(z)}\right)}{\left|\left|\im\left(\omega_2(z)\overline{\omega_3(z)},\omega_3(z)\overline{\omega_1(z)},\omega_1(z)\overline{\omega_2(z)}\right)\right|\right|},
\]
and it extends continuously to $p_j$ iff the end at $p_j$ is of type $(n_1^j,n_2^j,n_2^j)$.  Note that an end at $p_j$ is of type $(n_1^j,n_2^j,n_2^j)$ iff there is a real affine transformation $R$ such that the pole orders of the end at $p_j$ of $\tilde{f}=R\circ f$ satisfy $n_1^j<n_2^j=n_3^j$, with 
\[
(\tilde{\omega_1},\tilde{\omega_2},\tilde{\omega_3})=\left(\frac{a_1+ib_1+O(z)}{(z-p_j)^{n_1^j}},\frac{1+O(z)}{(z-p_j)^{n_2^j}},\frac{i+O(z)}{(z-p_j)^{n_2^j}}\right)dz
\]
in a neighborhood of $p_j$.

The harmonic graph in figure \ref{figure:g0(2,2,4)} has Weierstrass representation

\[
\left(\omega_1,\omega_2,\omega_3\right)=\left(1,-i,z^2\right)dz.
\]
It has one end at $z=\infty$ of type $(2,2,4)$, and so it doesn't have a well-defined limiting normal at its end.  The surfaces in figures \ref{figure:(0,1,2)a} and \ref{figure:(0,1,2)b} also don't have a well-defined limiting normal at their ends.  The surface in figure \ref{figure:(0,1,2)c} has a well-defined limiting normal at $z=0$ but not at $z=\infty$.

Lemma 2.11 in \cite{clw1} states that an end of type $(n_1,n_2,n_2)$ with $n_2>2$ cannot be embedded.  Hence, the only embedded harmonic ends for which the Gauss map extends continuously to the end are planes and catenoids - affinely equivalent to an end of type $(0,2,2)$ or $(1,2,2)$, respectively.

\subsection{Solving the period problem}
\label{ssecperiod}
Solving the period problem for a minimal surface is often a serious technical challenge, in particular for higher genus surfaces.  The technical difficulties arising from the period problem for harmonic surfaces are dealt with by adding combinations of holomorphic one-forms to $\omega_1, \omega_2$, and $\omega_3$ as necessary to close the periods.  This is done by utilizing the fact that there is a one-to-one correspondence between the real parts of the periods of a compact Riemann surface $X$ and a basis for holomorphic one-forms on $X$.  In section four, this is implemented on hyperelliptic Riemann surfaces of arbitrary genus.  In section five, this is implemented on a rectangular torus using the holomorphic one-form $dz$.

If $X$ is a hyperelliptic Riemann surface given by the equation
\[
w^2=\prod_{k=1}^{2g+2}(z-q_k)
\]
then Corollary 1 on page 98 of \cite{fkr1} states that ``the $g$ differentials 
\[
\frac{z^jdz}{w},\, j=0,1,\ldots,g-1,
\]
form a basis for the abelian differentials of the first kind on $X$."  Thus, if $\omega_1, \omega_2$, and $\omega_3$ are one-forms on $X$ and $A_j,B_j,$ $j=1,2,\ldots, g$ is a canonical homology basis for $X$ then there exist $\lambda_k\in\C$, $k=1,2,\ldots, 3g$ such that
\[
\int_{A_j}\tilde{\omega_k}=\int_{B_j}\tilde{\omega_k}
=0
\]
for $j=1,2,\ldots,g$ and $k=1,2,3$, with
\[
\tilde{\omega_k}=\omega_k+\sum_{i=1}^g\frac{\lambda_{(k-1)g+i}z^{i-1}dz}{w}.
\]
The modified one-forms $\tilde{\omega_k}$ have no periods on $X$, and they have the same end behavior at the poles of the $\omega_k$ because the terms added to each $\omega_k$ are holomorphic on $X$.

\section{Stacking parallel planes}
\label{sec3}
One of the simplest types of embedded ends is a planar end.  Riemann's minimal surface is a singly periodic embedded minimal surface consisting of a sequence of horizontal planar ends with consecutive planes connected by catenoid-shaped necks.  It was proven in \cite{we5} that a finite Riemann minimal surface with a horizontal planar end and two catenoid ends can't be embedded because the catenoid ends must have non-vertical limiting normals.  It is possible to create an embedded harmonic example with any number of horizontal planar ends stacked between two catenoid ends with vertical limiting normals.  See figure \ref{figure:5planes} for an example with 5 planar and 2 catenoid ends.
\begin{figure}[h]
	\centerline{ 
		\includegraphics[height=2.3in]{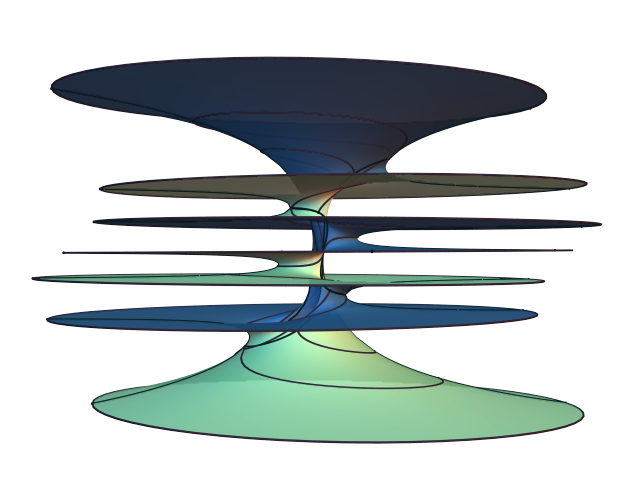}
		\includegraphics[height=2.3in]{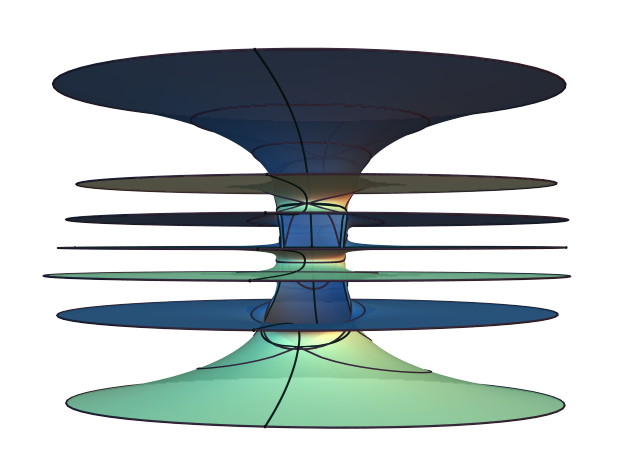}
			}
	\caption{Two views of an embedded harmonic surface with 5 planar and 2 catenoidal ends}
	\label{figure:5planes}
\end{figure}
\begin{theorem}
For each positive integer $n$ there exists a genus zero embedded harmonic surface with $2n-1$ ends of type $(0,2,2)$ (planar ends) stacked between two ends of type $(1,2,2)$ (catenoids), each end with vertical limiting normal.
\end{theorem}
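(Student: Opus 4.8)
The plan is to realize the surface by explicit Weierstrass data on the sphere $X=\mathbb{CP}^1=\C\cup\{\infty\}$, placing the two catenoid ends at $z=0$ and $z=\infty$ and the $2n-1$ planar ends at positive reals $0<a_1<\cdots<a_{2n-1}$ with pairwise distinct (indeed reciprocal) moduli. For the third coordinate I would take $\omega_3=\rho\,\frac{dz}{z}$ with $\rho\in\R\setminus\{0\}$, so that $f_3=\re\int\omega_3=\rho\log|z|$; this one-form has simple poles exactly at $0$ and $\infty$, which is what makes those two ends of type $(1,2,2)$ while the remaining ends stay of type $(0,2,2)$. For the horizontal coordinates I would set $\omega_1=dG$ and $\omega_2=i\,dK$ with
\[
G(z)=\alpha z-\frac{p_0}{z}-\sum_{j=1}^{2n-1}\frac{p_j}{z-a_j},\qquad K(z)=\beta z-\frac{q_0}{z}-\sum_{j=1}^{2n-1}\frac{q_j}{z-a_j},
\]
all constants real and positive. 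Then $\omega_1,\omega_2$ have exactly double poles at the $2n+1$ marked points and nowhere else, so each end has the asserted type; at every end the leading coefficient of $\omega_1$ is real and that of $\omega_2$ is purely imaginary, hence orthogonal, and they dominate $\omega_3$, so by the normal-form criterion of Section~\ref{sec2} the limiting normal is vertical.

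The period problem is then free of charge: $\omega_1=dG$ and $\omega_2=i\,dK$ are exact and so have no periods, while $\omega_3$ has the single real residue $\rho$, whose period has vanishing real part; since the genus is zero there is nothing further to check. Hence $f=(\re G,\,-\im K,\,\rho\log|z|)$ is a well-defined harmonic map on $X'=\C\cup\{\infty\}\setminus\{0,a_1,\dots,a_{2n-1},\infty\}$ with the prescribed ends.

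The crucial feature, and the reason for the choice $\omega_3=\rho\,\frac{dz}{z}$, is that $f_3$ depends only on $|z|$. Consequently two points with $f(z)=f(z')$ must satisfy $|z|=|z'|$, i.e.\ lie on a common circle $C_r=\{|z|=r\}$, and on $C_r$ the map $f$ is determined by the horizontal map $F=(\re G,-\im K)$. Thus $f$ is injective \emph{if and only if} $F$ is injective on every circle $C_r$, and the distinctness of the moduli $|a_j|$ places the ends at distinct heights $\rho\log|a_j|$ so that no two ends meet; combined with the immersion property (below) and properness, injectivity gives an embedding. This reduces embeddedness to the one-variable statement that each horizontal slice $\theta\mapsto F(re^{i\theta})$ is a simple closed curve.

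The main obstacle is exactly this slice-simplicity, uniformly in $r$; it is where the minimal model is obstructed (\cite{we5}) and where the freedom in $G,K$ must be spent. A first, easy reduction is to the mirror pairs $z'=\bar z$: because $G,K$ are real, $\re G(\bar z)=\re G(z)$ automatically, while positivity of the coefficients of $K$ gives $\im K(z)=y\bigl(\beta+q_0/|z|^2+\sum_j q_j/|z-a_j|^2\bigr)>0$ for $z=x+iy$ with $y>0$, so $F(z)\ne F(\bar z)$ off the real axis; the reflection $z\mapsto\bar z$ is thereby realized as the symmetry of the surface across $\{x_2=0\}$, and a reciprocal placement $a_ja_{2n-j}=1$ supplies the top--bottom symmetry of Figure~\ref{figure:5planes}. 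The genuinely hard collisions are the remaining off-diagonal pairs $z\ne z'$, $z'\ne\bar z$ on a common $C_r$; these I would rule out by taking the moduli $|a_j|$ widely separated and analysing $F|_{C_r}$ directly, the transitional radii $r\approx|a_j|$ — where the slice threads an end and the nearby planar and catenoidal behaviours compete — being the delicate case, which I expect to be the heart of the proof. It remains to check that $f$ is an immersion, i.e.\ $\phi\times\bar\phi\ne0$ for $\phi=(G',iK',\rho/z)$, equivalently $|\phi|^2>|Q|$ with $Q=(G')^2-(K')^2+\rho^2/z^2$: on the real axis positivity gives $K'>0$, so $\phi$ is never a complex multiple of a real vector there, and off the axis it suffices that $G'$ and $K'$ never vanish simultaneously, which holds for a generic admissible choice of coefficients.
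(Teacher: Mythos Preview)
Your placement of the catenoid ends at $0$ and $\infty$, so that $f_3=\rho\log|z|$ is purely radial, is a genuinely different set-up from the paper's. There the $2n+1$ ends sit at the integers $-n,\dots,n$ with catenoids at $\pm n$, the height form is $\omega_3=\bigl(\tfrac{1}{z+n}-\tfrac{1}{z-n}\bigr)dz$, and $\omega_1,\omega_2$ are sums of pure double poles with the sign pattern $(-1)^k$ inserted into $\omega_1$. The paper's argument is in fact only a construction: it writes down the data, observes that on a punctured sphere all periods are residues and that these are real, and records that each end is locally of the stated type with vertical limiting normal $N(k)=(0,0,(-1)^k)$. No global embeddedness argument is given there, so your reduction to simplicity of the horizontal slices $\theta\mapsto F(re^{i\theta})$ already aims at more than the paper proves.

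There is, however, a concrete gap in your data before any slice analysis can succeed. With all $p_j,q_j,\alpha,\beta>0$ the leading part of $(\omega_1,\omega_2)$ at every puncture has the form $(\text{positive real},\,i\cdot\text{positive real})$ times the double pole, and the Gauss-map formula (equivalently the paper's normal form for ends of type $(0,2,2)$ and $(1,2,2)$) then gives the \emph{same} limiting normal $(0,0,-1)$ at all $2n+1$ ends. But horizontally stacked parallel planar ends joined by necks must have \emph{alternating} vertical normals---this is precisely why the paper inserts the signs $(-1)^k$ in $\omega_1$ and why its proof remarks that ``the limiting normal at successive levels must alternate.'' With all normals equal your surface cannot realise the stacked picture, and the slice curves cannot all be simple. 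The easy repair is to alternate the signs of the $p_j$ (your positivity argument $\im K>0$ on $\{y>0\}$ uses only the $q_j$ and survives unchanged); after that correction your slice-by-slice strategy is a reasonable route to the global embeddedness the paper leaves unaddressed, though the transitional regime $r\approx a_j$ that you flag as the heart of the matter still has to be carried out.
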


\begin{proof}
An end of type $(1,2,2)$ at $z=a$, after rotating and rescaling if necessary, is of the form
\[
(\omega_1,\omega_2,\omega_3)=\left(\frac{\alpha}{(z-a)^2},\frac{i}{(z-a)^2},\frac{1}{z-a}\right)dz
\]
with $\alpha\in\R$ and limiting normal $N(a)=(0,0,\alpha/|\alpha|)$.  An end of type $(0,2,2)$ at $z=a$, after rotating and rescaling if necessary, is of the form
\[
(\omega_1,\omega_2,\omega_3)=\left(\frac{\beta}{(z-a)^2},\frac{i}{(z-a)^2},h(z)\right)dz
\]
with $\beta\in\R$, $h$ holomorphic at $z=a$, and limiting normal $N(a)=(0,0,\beta/|\beta|)$.  

Let's construct the surface so that the ends are placed at $k\in\Z$, $-n\leq k\leq n$.  Place a catenoid end at $z=-n$ opening downward, a catenoid end at $z=n$ opening upward, and planar ends at $z=k$, $-n<k<n$.  We want the ends to be in ascending order from $z=-n$ to $z=n$.  This is achieved by setting 
\[
\omega_3=\left(\frac{1}{z+n}-\frac{1}{z-n}\right)dz
\]
so that the third coordinate of $f$ is given by 
\[
f_3(z)=\frac{1}{2}\log\left(\frac{|z+n|^2}{|z-n|^2}\right).
\]
Fix the limiting normal $N(0)=(0,0,1)$.  The limiting normal at successive levels must alternate between $(0,0,1)$ and $(0,0,-1)$.  Hence, the planar ends at $z=\pm k$ are given by the data
\[
(\omega_1,\omega_2,\omega_3)=\left(\frac{(-1)^k}{(z\pm k)^2},\frac{i}{(z\pm k)^2},\frac{1}{z+n}-\frac{1}{z-n}\right)dz.
\]
Combining the data from each end, we get $X'=\overline{\C}-\{0,\pm 1,\pm 2,\ldots,\pm n\}$ and
\[
\begin{split}
\omega_1=&\left(\frac{1}{z^2}+\sum_{k=1}^n (-1)^k\left(\frac{1}{(z-k)^2}+\frac{1}{(z+k)^2}\right)\right)dz\\
\omega_2=&\left(\frac{i}{z^2}+i\sum_{k=1}^n \left(\frac{1}{(z-k)^2}+\frac{1}{(z+k)^2}\right)\right)dz\\
\omega_3=&\left(\frac{1}{z+n}-\frac{1}{z-n}\right)dz.
\end{split}
\]
The periods can be computed as residues since the domain $X'$ is a punctured sphere, and the residues of $\omega_1, \omega_2$, and $\omega_3$ are all real.  Thus, the period problem is automatically solved, and $(\omega_1,\omega_2,\omega_3)$ is the Weierstrass data for a harmonic surface with catenoid ends at $z=\pm n$ and planar ends at $z=k$, $n< k< n$.  The end at $z=k$ is embedded with limiting normal $N(k)=(0,0,(-1)^k)$, for $-n\leq k\leq n$.  
\end{proof}

\section{Constructing higher genus examples}
\label{sec4}
There is a simple procedure for producing higher genus surfaces with two orthogonal symmetry planes.  Surfaces with two ends of this type can be constructed by combining ends of type $(0,0,1)$, $(0,1,2)$, $(0,2,3)$, $(1,2,2)$, and $(2,2,3)$.  In this section, we construct a template for surfaces of arbitrary genus with two ends of type $(0,1,2)$.  The prototype for this is the genus zero surface defined on $\C-\{0\}$ by 
\[
\left(\omega_1,\omega_2,\omega_3\right)=\left(\frac{1}{z^2},i,\frac{1}{z}\right)dz
\]
with ends of type $(0,1,2)$ at $z=0$ and $z=\infty$.  See figure \ref{figure:(0,1,2)a}.

Let 
\[
X=\left\{(z,w)\in\C^2|w^2=z\prod_{k=1}^{2n}(z-a_k)\right\},
\]
a genus n hyperelliptic Riemann surface.  Assume that $a_k\in\R$ for $k=1,2,\ldots,2n$.  Then, $\tau(z,w)=(\overline{z},-\overline{w})$ and $\sigma(z,w)=(\overline{z},\overline{w})$ are automorphisms of $X$.  The pairs of points $(a_{2k-1},0), (a_{2k},0)\in X$ will correspond to a handle on the surface we are constructing.  Assuming the period problem is solved, the one forms 
\[
\begin{split}
\omega_1&=\frac{\prod_{k=1}^n(z-a_{2k-1})}{w}dz=\frac{1}{\sqrt{z}}\prod_{k=1}^n\frac{\sqrt{z-a_{2k-1}}}{\sqrt{z-a_{2k}}}dz\\
\omega_2&=\frac{i\prod_{k=1}^n(z-a_{2k})}{zw}dz=\frac{i}{z^{3/2}}\prod_{k=1}^n\frac{\sqrt{z-a_{2k}}}{\sqrt{z-a_{2k-1}}}dz\\
\omega_3&=\frac{1}{z}dz\\
\end{split}
\]
provide the Weierstrass data for a harmonic surface with domain $X$ and ends of type $(0,1,2)$ at $(0,0)$ and $(\infty,\infty)$, whose map is
\[
f(z)=\re\int^z\left(\omega_1,\omega_2,\omega_3\right).
\]
As discussed in section \ref{ssecperiod}, the period problem is solved by adding multiples of the holomorphic one-forms 
\[
\frac{z^{k-1}dz}{w},\;\;k=1,2,\ldots,n
\]
to $\omega_1$ and $\omega_2$ to solve the period problem.  As they are holomorphic on $X$, they won't affect the end behavior of the surface we are constructing.  There are no periods of $\omega_3$.  This yields the following theorem.

\begin{theorem}
For each $(a_1,a_2,\cdots,a_{2n})\in\R^{2n}$ with $a_j\neq a_k$ if $j\neq k$ there exist $\lambda_k\in\C$, $k=1,2,\ldots,2n$ such that
\[
X=\left\{(z,w)\in\C^2|w^2=z\prod_{k=1}^{2n}(z-a_k)\right\}
\]
and
\[
\begin{split}
\omega_1&=\left(\frac{1}{\sqrt{z}}\prod_{k=1}^n\frac{\sqrt{z-a_{2k-1}}}{\sqrt{z-a_{2k}}}+\sum_{k=1}^n\frac{\lambda_k z^{k-1}}{w}\right)dz\\
\omega_2&=i\left(\frac{1}{z^{3/2}}\prod_{k=1}^n\frac{\sqrt{z-a_{2k}}}{\sqrt{z-a_{2k-1}}}+\sum_{k=1}^n\frac{\lambda_{n+k} z^{k-1}}{w}\right)dz\\
\omega_3&=\frac{1}{z}dz
\end{split}
\]
is the Weierstrass data for an embedded harmonic surface of genus $n$ with two ends of type $(0,1,2)$.
\label{thm2}
\end{theorem}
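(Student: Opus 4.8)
The plan is to establish four things in turn — the genus, the types of the two ends, the solvability of the period problem, and finally embeddedness — with the last being the genuine obstacle. Throughout I assume tacitly that $a_k\neq 0$ as well (otherwise $z\prod(z-a_k)$ has a repeated factor and $z=0$ ceases to be a branch point), which is what makes the end at $(0,0)$ behave as claimed.

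First, for the genus: the polynomial $z\prod_{k=1}^{2n}(z-a_k)$ has degree $2n+1$, so the double cover $w^2=z\prod(z-a_k)$ has the $2n+1$ distinct finite branch points $0,a_1,\dots,a_{2n}$ together with a branch point at infinity (odd degree), giving $2n+2$ branch points and hence genus $(2n+2)/2-1=n$. Second, for the end types I would compute pole orders in a local uniformizer at each puncture. At $(0,0)$ write $z=t^2$, $w=t\,g(t)$ with $g(0)\neq 0$; a short computation gives orders $(0,2,1)$ for $(\omega_1,\omega_2,\omega_3)$, which reduces to $(0,1,2)$. At $(\infty,\infty)$ write $z=1/u^2$; one finds $(2,0,1)$, again reducing to $(0,1,2)$. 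Since the forms $z^{k-1}\,dz/w$ added to correct periods are holomorphic on all of $X$, they do not alter these orders, so both ends are of type $(0,1,2)$.

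Third, the period problem. The form $\omega_3=dz/z$ gives $f_3=\log|z|$, which is single-valued on $X'$, so $\omega_3$ contributes no period. Because the $a_k$ are real, $g(t)$ is an even series in the local uniformizer at each branch-point end, so the coefficient of $t^{-1}\,dt$ in $\omega_1,\omega_2$ vanishes and their residues at the two ends are zero; thus only the $2n$ handle periods remain. For $\omega_1$ these are the $2n$ real conditions $\re\int_{A_j}\tilde\omega_1=\re\int_{B_j}\tilde\omega_1=0$, and the $n$ complex coefficients $\lambda_1,\dots,\lambda_n$ supply exactly $2n$ real degrees of freedom; by the correspondence recalled in Section \ref{ssecperiod} (equivalently, nondegeneracy of the period matrix of the basis $z^{k-1}\,dz/w$), the map from these coefficients to the real periods is a real-linear isomorphism, so a unique choice kills the periods of $\omega_1$, and likewise $\lambda_{n+1},\dots,\lambda_{2n}$ handle $\omega_2$. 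This yields the asserted $\lambda_k$ and makes $f$ well-defined and harmonic.

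Finally, embeddedness, which I expect to be the main obstacle. Here I would exploit the symmetries forced by $a_k\in\R$. The anti-holomorphic involutions $\sigma(z,w)=(\bar z,\bar w)$ and $\tau(z,w)=(\bar z,-\bar w)$ satisfy $\sigma^\ast\omega_1=\overline{\omega_1}$, $\sigma^\ast\omega_2=-\overline{\omega_2}$, $\sigma^\ast\omega_3=\overline{\omega_3}$ (and symmetrically for $\tau$), so they induce the Euclidean reflections across $\{x_2=0\}$ and $\{x_1=0\}$; conjugating the period equations shows the unique solution $\lambda_k$ is real, so these reflection symmetries persist after correction. Their composition, the hyperelliptic involution $(z,w)\mapsto(z,-w)$, induces the $180^\circ$ rotation $(f_1,f_2,f_3)\mapsto(-f_1,-f_2,f_3)$ about the $x_3$-axis. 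The height $f_3=\log|z|$ is invariant under all three, is proper, and is strictly monotone in $|z|$, so any two points with the same image lie over a common circle $\{|z|=r\}$; it then suffices to show the horizontal map $(f_1,f_2)$ is injective on each such level and that the levels stack without collision, after which the reflections and rotation assemble the whole surface from one quadrant. The hard part will be controlling $(f_1,f_2)$ on each level circle once the $a_k$-dependent terms $\lambda_k z^{k-1}\,dz/w$ are present: one must verify that neither the period-correcting terms nor the placement of the handle branch points $a_{2k-1},a_{2k}$ introduces a self-intersection, and also that $f$ remains an immersion at the branch points. I anticipate this reduces to showing the level curves stay embedded and star-shaped about the axis (as they are the ellipses $(-\cos\theta/r,-r\sin\theta)$ in the genus-zero prototype), so that injectivity on each level together with the monotonicity of $f_3$ gives global injectivity.
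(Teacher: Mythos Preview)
Your treatment of the genus, the end types, and the period problem is essentially the paper's own argument: the paper's ``proof'' is the discussion immediately preceding the theorem, which writes down the uncorrected $\omega_k$, observes that the correcting terms $z^{k-1}\,dz/w$ are holomorphic on $X$ (hence leave the end behavior intact), notes that $\omega_3$ has no periods, and invokes Section~\ref{ssecperiod} for the existence of the $\lambda_k$. Your local-coordinate computation of the pole orders $(0,2,1)$ and $(2,0,1)$ is more explicit than what the paper writes but leads to the same conclusion.

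Where you go beyond the paper is on embeddedness. The paper does not prove it: the word ``embedded'' appears in the theorem statement, but the surrounding text only establishes the end types and the period closure. The symmetry facts you record --- that $\sigma$ and $\tau$ induce reflections in the $x_1x_3$- and $x_2x_3$-planes and that their composition is the hyperelliptic involution acting as rotation about the $x_3$-axis --- are exactly what the paper states in the paragraph following the theorem, but neither the paper nor its cited references carry out the level-curve injectivity argument you sketch. So the obstacle you flag (controlling $(f_1,f_2)$ on each level $|z|=r$ once the $\lambda_k$-terms are present, and checking immersion at the branch points) is real, and it is a gap shared by the paper itself rather than a defect of your proposal.
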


Note that $\tau^*(\omega_1,\omega_2,\omega_3)=(-\overline{\omega_1},\overline{\omega_2},\overline{\omega_3})$ and $\sigma^*(\omega_1,\omega_2,\omega_3)=(\overline{\omega_1},-\overline{\omega_2},\overline{\omega_3})$.  Thus, $\tau$ and $\sigma$ induce reflections in planes parallel to the $x_2x_3$-plane and $x_1x_3$-plane, respectively.  Hence, the image $f(X)$ has two symmetry planes and can be split into four simply connected pieces.  One fundamental domain is $X_1=\{(z,w)\in X|z,w>0\}$ - see figure \ref{figure:(0,1,2)funddomb}.  The boundary curves on the left and right are the images of the positive and negative real axis, respectively.  So, if $a_k>0$ for all $k$ or $a_k<0$ for all $k$ then the handles will all lie along one of the two symmetry curves.  See figure \ref{figure:g4(0,1,2)funddom} for an example with the handles added along the image of the positive real axis.

\begin{figure}[h]
    \centering
    \begin{subfigure}[b]{0.45\textwidth}
       \centering
        \includegraphics[width=.6\textwidth]{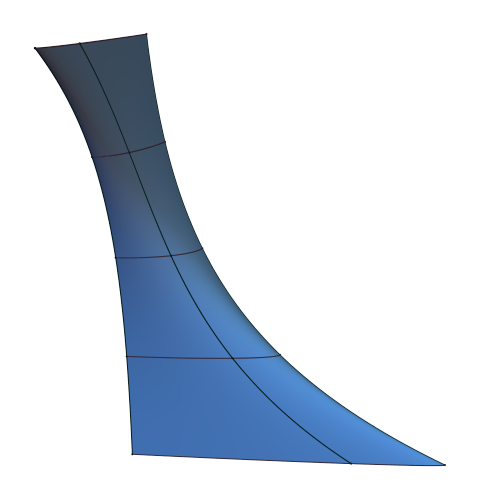}
        \caption{$f(X_1)$ before adding handles}
        \label{figure:(0,1,2)funddomb}
    \end{subfigure}
    ~ 
    \begin{subfigure}[b]{0.45\textwidth}
    	    \centering
        \includegraphics[width=\textwidth]{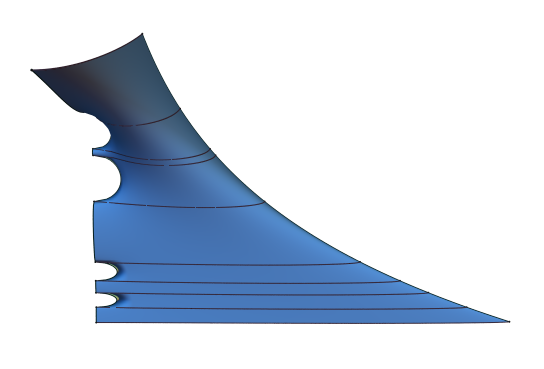}
        \caption{$f(X_1)$ with $a_k>0$}
        \label{figure:g4(0,1,2)funddom}
    \end{subfigure}
    ~ 
    \caption{}\label{figure:(0,1,2)funddom}
\end{figure}
\begin{example}
Let's examine the surface when $n=4$ and 
\[
(a_1,a_2,a_3,a_4,a_5,a_6,a_7,a_8)=\left(\frac{1}{5},\frac{1}{4},\frac{3}{10},\frac{2}{5},1,2,2.2,4\right).
\]
See figure \ref{figure:(0,1,2)genus4widemouthc}.  The handles will be added along the left boundary curve of $f(X_1)$, as in figure \ref{figure:g4(0,1,2)funddom}.  From the bottom to the top, the handles will lie between the images of the pairs $(a_1,a_2)$, $(a_3,a_4)$, $(a_5,a_6)$, and $(a_7,a_8)$.  In this case,
\[
\begin{split}
\omega_1&=\left(\frac{\sqrt{z-1/5}\sqrt{z-3/10}\sqrt{z-1}\sqrt{z-2.2}}{\sqrt{z}\sqrt{z-1/4}\sqrt{z-2/5}\sqrt{z-2}\sqrt{z-4}}+\frac{\lambda_1+\lambda_2 z+\lambda_3 z^2+\lambda_4 z^3}{w}\right)dz\\
\omega_2&=i\left(\frac{\sqrt{z-1/4}\sqrt{z-2/5}\sqrt{z-2}\sqrt{z-4}}{z^{3/2}\sqrt{z-1/5}\sqrt{z-3/10}\sqrt{z-1}\sqrt{z-2.2}}+\frac{\lambda_5+\lambda_6 z+\lambda_7 z^2+\lambda_8 z^3}{w}\right)dz\\
\omega_3&=\frac{1}{z}dz.
\end{split}
\]
\begin{figure}[h]
	\centerline{ 
		\includegraphics[height=2in]{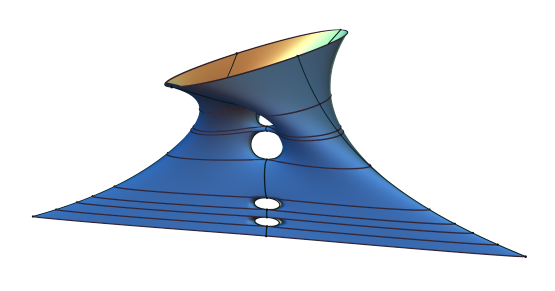}
			}
	\caption{$(a_1,a_2,a_3,a_4,a_5,a_6,a_7,a_8)=\left(\frac{1}{5},\frac{1}{4},\frac{3}{10},\frac{2}{5},1,2,2.2,4\right)$}
	\label{figure:(0,1,2)genus4widemouthc}
\end{figure}
\end{example}

The symmetries of the surface reduce the period problem to the equations
\[
\re\int_0^{a_1}\omega_j=\re\int_{a_{k}}^{a_{k+1}}\omega_j=0
\]
for $j=1,2$ and $k=1,2,3,4,5,6,7$.  This is a system of linear equations in the $\lambda_k$ variables which, by theorem \ref{thm2}, is solvable.  Solving numerically, one finds the system is solved when 
\[
\begin{split}
\lambda_1&=-0.04470938,\, \lambda_2=0.32501167, \lambda_3=-0.64421279,\, \lambda_4=0.25789177,\\
\lambda_5&=-0.48150461,\, \lambda_6=2.46598103,\,\lambda_7=-2.99070994,\, \lambda_8=0.81972586.
\end{split}
\]

\begin{example}
\label{example2}
Let's examine the surface when $n=4$ and 
\[
(a_1,a_2,a_3,a_4,a_5,a_6,a_7,a_8)=\left(\frac{1}{5},\frac{1}{4},\frac{3}{10},\frac{2}{5},-\frac{5}{2},-\frac{10}{3},-4,-5\right).
\]  
See figure \ref{figure:genus4widemoutha}.  With this example, the two lower handles, between the images of the pairs $(a_1,a_2)$ and $(a_3,a_4)$, will be added along the left boundary curve of $f(X_1)$ in figure \ref{figure:(0,1,2)funddomb}.  The two upper handles, between the images of the pairs $(a_5,a_6)$ and $(a_7,a_8)$, will be added along the right boundary curve of $f(X_1)$ in figure \ref{figure:(0,1,2)funddomb}.  In this case,
\[
\begin{split}
\omega_1&=\left(\frac{\sqrt{z-\frac{1}{5}}\sqrt{z-\frac{3}{10}}\sqrt{z+\frac{5}{2}}\sqrt{z+4}}{\sqrt{z}\sqrt{z-\frac{1}{4}}\sqrt{z-\frac{2}{5}}\sqrt{z+\frac{10}{3}}\sqrt{z+5}}+\frac{\lambda_1+\lambda_2 z+\lambda_3 z^2+\lambda_4 z^3}{w}\right)dz\\
\omega_2&=i\left(\frac{\sqrt{z}\sqrt{z-\frac{1}{4}}\sqrt{z-\frac{2}{5}}\sqrt{z+\frac{10}{3}}\sqrt{z+5}}{z^{3/2}\sqrt{z-\frac{1}{5}}\sqrt{z-\frac{3}{10}}\sqrt{z+\frac{5}{2}}\sqrt{z+4}}+\frac{\lambda_5+\lambda_6 z+\lambda_7 z^2+\lambda_8 z^3}{w}\right)dz\\
\omega_3&=\frac{1}{z}dz.
\end{split}
\]
\begin{figure}[h]
	\centerline{ 
		\includegraphics[height=1.97in]{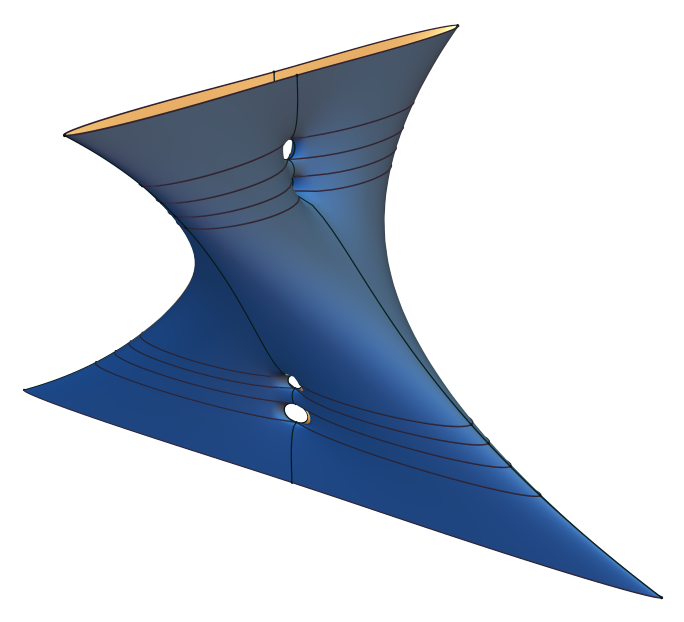}
			}
	\caption{$(a_1,a_2,a_3,a_4,a_5,a_6,a_7,a_8)=\left(\frac{1}{5},\frac{1}{4},\frac{3}{10},\frac{2}{5},-\frac{5}{2},-\frac{10}{3},-4,-5\right)$}
	\label{figure:genus4widemoutha}
\end{figure}

The symmetries of the surface reduce the period problem to the equations
\[
\re\int_{\alpha}\omega_j=\re\int_0^{a_1}\omega_j=\re\int_{a_{k}}^{a_{k+1}}\omega_j=0
\]
for $j=1,2$, $k=1,2,3,5,6,7$, where $\alpha$ is the half-circle from $a_4$ to $a_5$ in the upper half plane.  In example $4.1$, the corresponding period between $a_4$ and $a_5$ could be calculated as an integral over $(a_4,a_5)\in\R$.  Here, $a_4=2/5$ and $a_5=-5/2$, and so if the integral between $a_4$ and $a_5$ is restricted to the real axis then it would need to go through a singularity at $z=0$ or $z=\infty$.  

The period problem is a system of equations in the $\lambda_k$ variables which, by theorem \ref{thm2}, is solvable.  Solving numerically, one finds that system is solved when 
\[
\begin{split}
\lambda_1&=-0.51419509,\, \lambda_2=-0.0470358,\,\lambda_3=0.85173589,\, \lambda_4=0.25097214,\\
\lambda_5&=-0.4182869,\, \lambda_6=1.41955981,\,\lambda_7=0.07839301,\, \lambda_8= -0.08569918.
\end{split}
\]
\end{example}

\begin{example}
If you prefer to have an end of type $(0,2,3)$ at $(0,0)$ instead of type $(0,1,2)$ then a small modification to $\omega_3$ is all that is needed.  Keep $\omega_1$ and $\omega_2$ as in example \ref{example2}, and let
\[
\omega_3=\left(\frac{1}{z}+\frac{1}{z^2}\right)dz.
\]
This produces an embedded, genus four surface with ends of type $(0,1,2)$ and $(0,2,3)$.  See figure \ref{figure:genus4(0,2,1)(2,0,3)}.

\begin{figure}[h]
	\centerline{ 
		\includegraphics[height=3in]{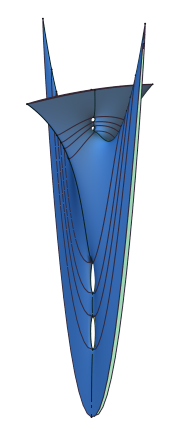}
			}
	\caption{Genus four surface with ends of type $(0,1,2)$ and $(0,2,3)$}
	\label{figure:genus4(0,2,1)(2,0,3)}
\end{figure}
\end{example}

\section{Adding handles to Scherk's doubly periodic minimal surface}
\label{sec5}
As proven by Weber and Wolf in \cite{ww4}, for each $n\geq 0$ there exists an embedded, doubly periodic, genus $n$ minimal surface with four Scherk ends in the quotient.  Each surface lies in a one-parameter family, with the parameter given by the angle between the top and bottom ends.  Thus, there is very little flexibility as to how one adds the handles to Scherk's doubly periodic surface.  If we drop the minimal condition then it is possible to produce embedded, doubly periodic harmonic surfaces with Scherk ends of arbitrary genus, where you can choose the precise location and direction of the handles.  

Let's consider how we can add two handles to Scherk's surface.  Assume that the top and bottom ends are orthogonal and that there are two orthogonal symmetry planes.  This allows for working with a genus zero fundamental domain that has two ends.  We can explicitly write down the Weierstrass representation for these surfaces, proceeding as if we are creating a minimal surface with 
\[
\begin{split}
\omega_1&=\frac{1}{2}\left(\frac{1}{g(z)}-g(z)\right)\omega_3\\
\omega_2&=\frac{i}{2}\left(\frac{1}{g(z)}+g(z)\right)\omega_3\\
\end{split}
\]
where $g$ is the composition of stereographic projection with the Gauss map.  The normal vector will be vertical at the top and bottom of the two handles on the surface.  

One representation of Scherk's surface is given by
\[
\begin{split}
g(z)&=\frac{\sqrt{z-a_1}}{\sqrt{z+a_1}}\\
\omega_3&=\frac{1}{z}dz,
\end{split}
\]
which places Scherk ends at $0$ and $\infty$.  The surface normal points up at $z=-a_1$ and down at $z=a_1$.  See figure \ref{figure:sscherk}.

\begin{figure}[h]
    \centering
    \begin{subfigure}[b]{0.45\textwidth}
        \includegraphics[width=\textwidth]{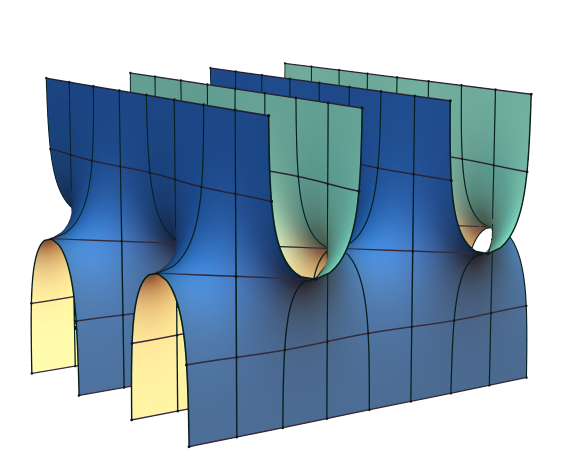}
        \caption{Scherk's doubly periodic surface}
        \label{figure:sscherk}
    \end{subfigure}
    ~ 
    \begin{subfigure}[b]{0.45\textwidth}
    	    \centering
        \includegraphics[width=.3\textwidth]{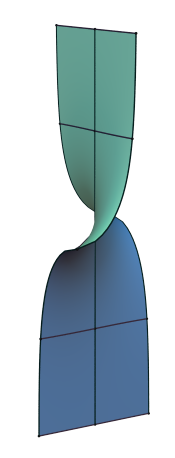}
        \caption{Image of the fundamental domain}
        \label{figure:Scherkfund}
    \end{subfigure}
    ~ 
    \caption{}\label{figure:scherk}
\end{figure}

The easiest places to add handles are along the two vertical curves in the symmetry planes, as shown in figure \ref{figure:Scherkfund}.  These curves are the image of the positive and negative real axes.  Let's demonstrate adding handles along the positive real axis.  Note that it isn't difficult to add handles along both curves.

Consider the Weierstrass data
\[
\begin{split}
g(z)&=\frac{\sqrt{z-a_1}\sqrt{z-a_3}\sqrt{z-a_5}}{\sqrt{z-a_2}\sqrt{z-a_4}\sqrt{z-b_1}}\\
\omega_3&=\frac{1}{z}dz
\end{split}
\]
with $b_1<0<a_1<a_2<a_3<a_4<a_5$.  One handle will be between $a_1$ and $a_2$.  The other handle will be between $a_3$ and $a_4$.  Note the surface normal points down at $a_1$, $a_3$, and $a_5$, and the surface normal points up at $a_2$, $a_4$, and $b_1$.  

In order to have orthogonal ends, we need $g(0)=i$, which forces 
\[
b_1=-\frac{a_1a_3a_5}{a_2a_4}.
\]
The period problem reduces to solving the equations
\[
\re\int_{a_1}^{a_2}\omega_j=\re\int_{a_2}^{a_3}\omega_j=\re\int_{a_3}^{a_4}\omega_j=\re\int_{a_4}^{a_5}\omega_j=0
\]
for $j=1,2$.  If we were attempting to find a minimal surface then we would solve for $a_1,a_2,a_3, a_4$, and $a_5$.  There is one solution.  Instead, using the technique from section \ref{ssecperiod} for solving the period problem on a hyperelliptic Riemann surface, we can add multiples of the holomorphic forms 
\[
\frac{dz}{w}, \frac{zdz}{w}
\]
where
\[
w=\sqrt{z-a_1}\sqrt{z-a_2}\sqrt{z-a_3}\sqrt{z-a_4}\sqrt{z-a_5}\sqrt{z-b_1}
\]
to $\omega_1$ and $\omega_2$ to solve the period problem, yielding the following result.

\begin{theorem}
Let $(a_1,a_2,a_3,a_4,a_5)\in\R^5_+$ with $0<a_1<a_2<a_3<a_4<a_5$,
\[
\begin{split}
b_1&=-\frac{a_1a_3a_5}{a_2a_4}, \,w=\sqrt{z-a_1}\sqrt{z-a_2}\sqrt{z-a_3}\sqrt{z-a_4}\sqrt{z-a_5}\sqrt{z-b_1},\\
g(z)&=\frac{\sqrt{z-a_1}\sqrt{z-a_3}\sqrt{z-a_5}}{\sqrt{z-a_2}\sqrt{z-a_4}\sqrt{z-b_1}},\, \text{and } \,\omega_3=\frac{1}{z}dz.
\end{split}
\]
Then there exists $\lambda_1,\lambda_2,\lambda_3,\lambda_4\in\C$ such that 
\[
\begin{split}
\omega_1&=\frac{1}{2}\left(\frac{1}{g(z)}-g(z)\right)\omega_3+\frac{\left(\lambda_1+\lambda_2z\right)dz}{w}\\
\omega_2&=\frac{i}{2}\left(\frac{1}{g(z)}+g(z)\right)\omega_3+\frac{\left(\lambda_3+\lambda_4z\right)dz}{w}\\
\omega_3&=\frac{1}{z}dz
\end{split}
\]
is the Weierstrass representation for a genus two doubly periodic harmonic surface with orthogonal top and bottom Scherk ends.  The handles lie between every other pair of consecutive ends.
\label{thm:scherk}
\end{theorem}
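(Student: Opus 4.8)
The plan is to treat the stated data exactly as in the proof of Theorem \ref{thm2}: build the end geometry and the reflection symmetries into the ``minimal-surface'' ansatz for $g$ and $\omega_3$, and then absorb the period obstruction into the free holomorphic parameters $\lambda_1,\dots,\lambda_4$. First I would record the underlying Riemann surface. Since $b_1<0<a_1<\cdots<a_5$ are six distinct real numbers, the branched double cover $w^2=(z-a_1)\cdots(z-a_5)(z-b_1)$ is a compact hyperelliptic surface with $2g+2=6$ branch points, hence of genus two, and $dz/w,\;z\,dz/w$ form a basis for its holomorphic one-forms. The anti-holomorphic involutions $\tau(z,w)=(\overline z,-\overline w)$ and $\sigma(z,w)=(\overline z,\overline w)$ are well defined because all branch points are real; as in the discussion following Theorem \ref{thm2}, they induce two orthogonal reflection planes, so it suffices to control the surface on one fundamental domain and the period problem collapses onto real integrals of $\omega_1,\omega_2$ between consecutive branch points on the real axis.

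Next I would verify the geometry of the ends. The form $\omega_3=dz/z$ has simple poles over $z=0$ and over $z=\infty$, so $\re\int^z\omega_3$ grows logarithmically there; combined with the fact that $g$ extends to a finite nonzero value at these points, this is precisely Scherk-end behavior, and the resulting surface is doubly periodic with top and bottom Scherk ends. A direct computation using $b_1=-a_1a_3a_5/(a_2a_4)$ gives $g(0)^2=-1$, so $g(0)=i$, while $g(\infty)=1$; since $g$ is the stereographic image of the Gauss map, these values place the limiting normals at the two ends along $(0,1,0)$ and $(1,0,0)$, so the ends are orthogonal. The zeros of $g$ at $a_1,a_3,a_5$ and the poles at $a_2,a_4,b_1$ are exactly the points where the normal is vertical, pointing down and up respectively; these alternate along the real axis, which is what forces a handle to appear between every other pair of consecutive ends.

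The heart of the argument, and the step I expect to be the main obstacle, is showing that $\lambda_1,\dots,\lambda_4$ can be chosen to kill all periods. The form $\omega_3$ contributes no real periods: its residues at the ends are real and its homology periods are pinned down by $\sigma$ and $\tau$, exactly as in Theorem \ref{thm2}. Likewise the residues of $\omega_1,\omega_2$ at the ends are real by the reality of all data, so the residue (end) conditions are automatic and only the genus-two handle periods remain. By the symmetry reduction these amount to
\[
\re\int_{a_1}^{a_2}\omega_j=\re\int_{a_2}^{a_3}\omega_j=\re\int_{a_3}^{a_4}\omega_j=\re\int_{a_4}^{a_5}\omega_j=0,\qquad j=1,2.
\]
For $\omega_1$ the added term is $(\lambda_1+\lambda_2 z)\,dz/w$ and for $\omega_2$ it is $(\lambda_3+\lambda_4 z)\,dz/w$, so the two forms are corrected independently, each by an arbitrary element of the full two-complex-dimensional space of holomorphic differentials. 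I would then invoke the correspondence recorded in Section \ref{ssecperiod}: on a compact genus-two surface the real-linear map sending a holomorphic one-form to the vector of real parts of its periods over a homology basis is an isomorphism (the nondegeneracy coming from the Riemann bilinear relations). The symmetry identifies the four real arc integrals above with these real periods, so the four real equations for $j=1$ determine $(\lambda_1,\lambda_2)\in\C^2$ uniquely and the four for $j=2$ determine $(\lambda_3,\lambda_4)\in\C^2$ uniquely, which is exactly the existence assertion of the theorem.

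Finally I would note that embeddedness and the double periodicity are inherited from the symmetric Scherk configuration: the two orthogonal symmetry planes cut the surface into congruent graphical pieces, the ends are asymptotic to the flat, embedded Scherk ends over $0$ and $\infty$, and the alternating vertical-normal pattern at $a_1,\dots,a_5,b_1$ yields two embedded handles sitting between alternate consecutive ends, as seen in the figures. The only genuinely nontrivial point is the linear obstruction resolved above; everything else follows directly from the hyperelliptic ansatz and the reality of the branch points.
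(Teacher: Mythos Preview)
Your proposal is correct and follows essentially the same route as the paper: set up the genus-two hyperelliptic cover with real branch points, use $g(0)=i$ (forced by $b_1=-a_1a_3a_5/(a_2a_4)$) to get orthogonal Scherk ends at $0$ and $\infty$, reduce the period problem via the two reflection symmetries to the four real arc integrals $\re\int_{a_k}^{a_{k+1}}\omega_j=0$, and then invoke Section~\ref{ssecperiod} to absorb these into the holomorphic correction $(\lambda_1+\lambda_2 z)\,dz/w$ and $(\lambda_3+\lambda_4 z)\,dz/w$.

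One small slip worth fixing: the residues of $\omega_1,\omega_2$ at $z=0,\infty$ are not real but purely imaginary (e.g.\ $g(0)=i$ gives $\frac12(1/g-g)\cdot\frac1z\sim -\frac{i}{z}$), so $\re\oint\omega_j$ around the ends is a nonzero real number, namely $\pm 2\pi$. This is not an obstruction---it is precisely the translation period that makes the surface doubly periodic---so your conclusion that ``the residue (end) conditions are automatic and only the genus-two handle periods remain'' is right, but the justification should be that these residue periods are the desired translation vectors rather than that they vanish.
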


\begin{example}
The surface from theorem \ref{thm:scherk} will be a minimal surface when $\lambda_k=0$ for $k=1,2,\ldots,8$.  In this case, the period problem is solved when
\[
a_3=1, a_4=\frac{1}{a_2},\,a_5=\frac{1}{a_1}
\]
and
\[
(a_1,a_2)\approx(0.12539914,0.25068715).
\]
See figure \ref{figure:g2minscherk}.
\begin{figure}[h]
	\centerline{ 
		\includegraphics[height=2in]{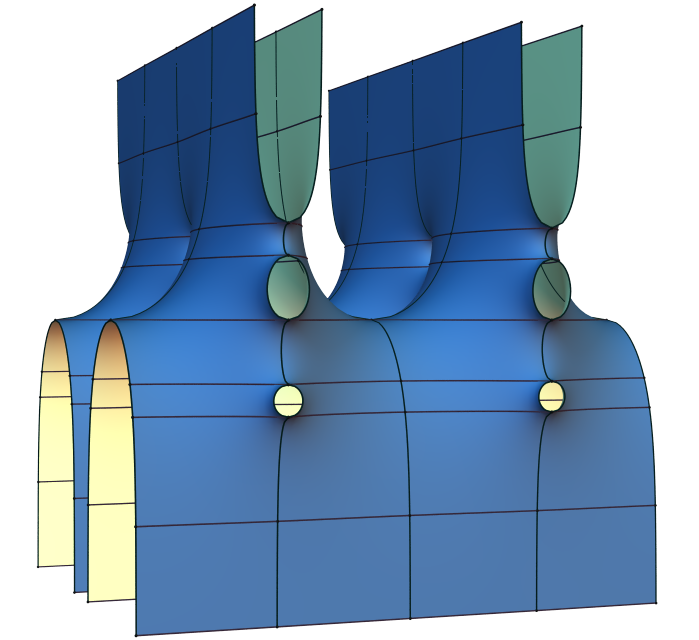}
			}
	\caption{Genus two minimal Scherk's surface with handles between every other pair of ends}
	\label{figure:g2minscherk}
\end{figure}
\end{example}

\begin{example}
If $(a_1,a_2,a_3,a_4,a_5)=(0.04,0.08,0.3,0.9,10)$ then the period problem is solved when 
\[
\lambda_1=0.13245255,\, \lambda_2=-1.37068243,\,\lambda_3=-0.36951468i,\, \lambda_4=-0.36951468i.
\]
See figure \ref{figure:scherkex1}.

If $(a_1,a_2,a_3,a_4,a_5)=(0.01,0.5,1,5,10)$ then the period problem is solved when 
\[
\lambda_1=1.65200244,\, \lambda_2=-1.62539483,\,\lambda_3=-1.18838503i,\, \lambda_4=-1.18838503i.
\]
See figure \ref{figure:scherkex2}.
\end{example}

\begin{figure}[h]
    \centering
    \begin{subfigure}[b]{0.48\textwidth}
    	    \centering
        \includegraphics[width=.75\textwidth]{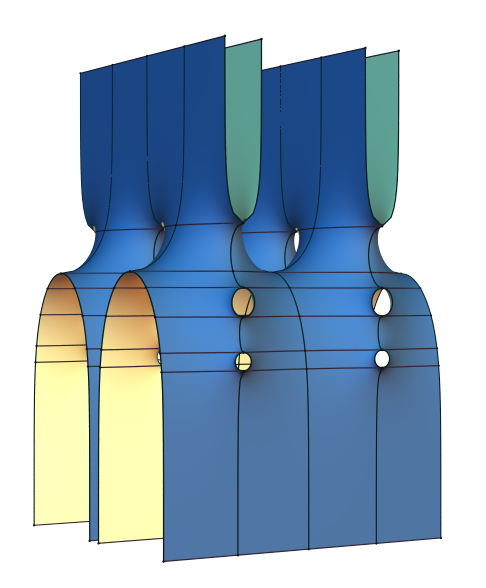}
        \caption{{$(a_1,a_2,a_3,a_4,a_5)= (.04,.08,.3,.9,10)$}}
        \label{figure:scherkex1}
    \end{subfigure}
    ~ 
        \begin{subfigure}[b]{0.48\textwidth}
    	    \centering
        \includegraphics[width=.75\textwidth]{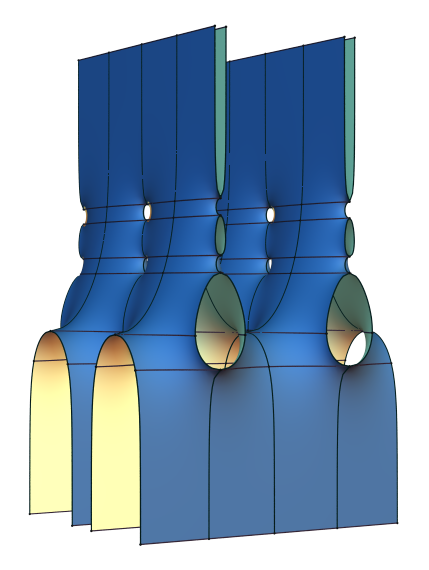}
        \caption{$(a_1,a_2,a_3,a_4,a_5)=(.01,.5,1,5,10)$}
        \label{figure:scherkex2}
    \end{subfigure}
    \caption{Genus two harmonic Scherk's surfaces with handles between every other pair of ends}\label{figure:g2scherk}
\end{figure}

Alternatively, we can change the direction of a handle.  The surfaces in figure \ref{figure:g2scherk} have handles that open in the same direction.  We could adjust the Weierstrass representation so the upper handle opens inward.  In that case, we switch the value of $g$ at $a_3$ and $a_4$ from the previous example, which forces the normal at $a_2$ and $a_3$ to be $(0,0,1)$.  The Weierstrass representation is

\[
\begin{split}
g(z)&=\frac{\sqrt{z-a_1}\sqrt{z-a_4}\sqrt{z-a_5}}{\sqrt{z-a_2}\sqrt{z-a_3}\sqrt{z-b_1}}\\
\omega_3&=\frac{1}{z}dz,
\end{split}
\]
with 
\[
b_1=-\frac{a_1a_4a_5}{a_2a_3}.
\]

In this case, the period problem is given by the equations
\[
\begin{split}
\re\int_{a_1}^{a_2}\omega_1&=\re\int_{a_2}^{a_3}\omega_1=\re\int_{a_3}^{a_4}\omega_1=\re\int_{a_4}^{a_5}\omega_1=0\\
\re\int_{a_1}^{a_2}\omega_2&=\re\int_{a_3}^{a_4}\omega_2=0\\
\re\int_{a_2}^{a_3}\omega_2&=-\re\int_{a_4}^{a_5}\omega_2=\pi,
\end{split}
\]
and it can be solved by adding multiples of the holomorphic forms
\[
\frac{dz}{w},\,\frac{zdz}{w}
\]
to $\omega_1$ and $\omega_2$, proving the following theorem.
\begin{theorem}
Let $(a_1,a_2,a_3,a_4,a_5)\in\R^5_+$ with $0<a_1<a_2<a_3<a_4<a_5$,
\[
\begin{split}
b_1&=-\frac{a_1a_4a_5}{a_2a_3}, \,w=\sqrt{z-a_1}\sqrt{z-a_2}\sqrt{z-a_3}\sqrt{z-a_4}\sqrt{z-a_5}\sqrt{z-b_1},\\
g(z)&=\frac{\sqrt{z-a_1}\sqrt{z-a_4}\sqrt{z-a_5}}{\sqrt{z-a_2}\sqrt{z-a_3}\sqrt{z-b_1}},\, \text{and } \,\omega_3=\frac{1}{z}dz.
\end{split}
\]
Then there exists $\lambda_1,\lambda_2,\lambda_3,\lambda_4\in\C$ such that 
\[
\begin{split}
\omega_1&=\frac{1}{2}\left(\frac{1}{g(z)}-g(z)\right)\omega_3+\frac{\left(\lambda_1+\lambda_2z\right)dz}{w}\\
\omega_2&=\frac{i}{2}\left(\frac{1}{g(z)}+g(z)\right)\omega_3+\frac{\left(\lambda_3+\lambda_4z\right)dz}{w}\\
\omega_3&=\frac{1}{z}dz
\end{split}
\]
is the Weierstrass representation for a genus two doubly periodic harmonic surface with orthogonal top and bottom Scherk ends.  There is one handle between every consecutive pair of ends.
\end{theorem}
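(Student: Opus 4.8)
The plan is to mirror the proof of Theorem \ref{thm:scherk}, since the present construction differs from it only in the arrangement of the branch points of $g$ and in the appearance of nonzero translational periods on the right-hand side of the period equations. First I would record the global setup: $X$ is the hyperelliptic surface $w^2=(z-a_1)\cdots(z-a_5)(z-b_1)$, and since $0<a_1<\cdots<a_5$ and $b_1=-a_1a_4a_5/(a_2a_3)<0$, the six branch points are real and distinct, so $X$ has genus two. By the corollary of \cite{fkr1} quoted in Section \ref{ssecperiod}, the forms $dz/w$ and $z\,dz/w$ form a basis for the abelian differentials of the first kind on $X$; being holomorphic, the additions $(\lambda_1+\lambda_2z)\,dz/w$ to $\omega_1$ and $(\lambda_3+\lambda_4z)\,dz/w$ to $\omega_2$ do not change the end behavior. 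The factor $\omega_3=dz/z$ places Scherk ends at $z=0$ and $z=\infty$, and a direct computation shows that the prescribed $b_1$ is precisely the condition $g(0)=i$, which is what makes the top and bottom ends orthogonal.

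Next I would use the two anti-holomorphic symmetries, exactly as in Theorem \ref{thm2}. Because every branch point is real, the maps $(z,w)\mapsto(\bar z,\pm\bar w)$ are symmetries of $X$ inducing reflections in two orthogonal planes, and they cut the surface into a simply connected fundamental piece whose boundary consists of the images of the real intervals $(a_k,a_{k+1})$. Tracking how $\omega_1$ and $\omega_2$ transform under these reflections reduces the full homology period problem to the real-axis conditions displayed just before the theorem: all four $\omega_1$-periods vanish, the $\omega_2$-periods over $(a_1,a_2)$ and $(a_3,a_4)$ vanish, while the $\omega_2$-periods over $(a_2,a_3)$ and $(a_4,a_5)$ equal $+\pi$ and $-\pi$. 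The nonzero values are the translational periods that make the quotient doubly periodic, and the sign flip between them records the inward-opening upper handle produced by interchanging the roles of $a_3$ and $a_4$ in $g$.

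The core of the argument is solving this system for $(\lambda_1,\lambda_2,\lambda_3,\lambda_4)\in\C^4$. Each real-axis period is an affine function of the $\lambda_k$, equal to the period of the unmodified form plus the period of the added holomorphic form, and the latter is $\R$-linear in the real and imaginary parts of the $\lambda_k$. The two complex parameters $\lambda_1,\lambda_2$ give four real unknowns matched against the four real $\omega_1$-conditions, and $\lambda_3,\lambda_4$ give four real unknowns matched against the four real $\omega_2$-conditions. By the one-to-one correspondence between the real parts of the periods and the space of holomorphic one-forms recalled in Section \ref{ssecperiod}, each of these linear maps is an isomorphism onto its period data, so each affine system has a unique solution no matter what the prescribed right-hand side is. \textbf{The main new point}, compared with Theorem \ref{thm:scherk}, is checking that the nonzero $\pi$-periods do not obstruct solvability; this is automatic, since solvability of an affine system depends only on the invertibility of its linear part, which the correspondence provides.

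Finally I would read off the geometry from the divisor of $g$. The zeros and poles of $g$ at the six real branch points fix where the normal is vertical and in which direction, and following them along the real axis determines the neck structure and hence the handle configuration asserted in the theorem. Embeddedness then follows from the graphical behavior of the fundamental piece near each end and neck, as in the examples depicted in Figure \ref{figure:g2scherk}.
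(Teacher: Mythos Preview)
Your proposal is correct and follows essentially the same route as the paper: the paper's argument is the discussion immediately preceding the theorem, which sets $b_1$ so that $g(0)=i$, writes down the same eight real period conditions (four vanishing $\omega_1$-periods, two vanishing and two $\pm\pi$ $\omega_2$-periods), and then appeals to Section~\ref{ssecperiod} to solve by adding multiples of $dz/w$ and $z\,dz/w$. Your write-up adds a bit more detail on why the affine system is solvable (invertibility of the linear part via the real-period isomorphism), which the paper leaves implicit; your final paragraph on embeddedness is unnecessary, since the theorem as stated does not assert embeddedness.
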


\begin{example}
When $(a_1,a_2,a_3,a_4,a_5)=(0.0001,0.0009,0.006,0.02,10)$ the period problem is solved when 
\[
\lambda_1=-0.72642528,\, \lambda_2=-0.77146839,\,\lambda_3=3.38200891i,\, \lambda_4=3.43121095i.
\]
See figure \ref{figure:scherkex3}.

When $(a_1,a_2,a_3,a_4,a_5)=(0.00001,0.0001,0.006,0.2,0.8)$ the period problem is solved when 
\[
\lambda_1=-0.38387911,\, \lambda_2=-0.42144590,\,\lambda_3=-0.24540826i,\, \lambda_4=-0.19500638i.
\]
See figure \ref{figure:scherkex4}.
\end{example}
\begin{figure}[h]
    \centering
    \begin{subfigure}[b]{0.48\textwidth}
    	    \centering
        \includegraphics[width=.7\textwidth]{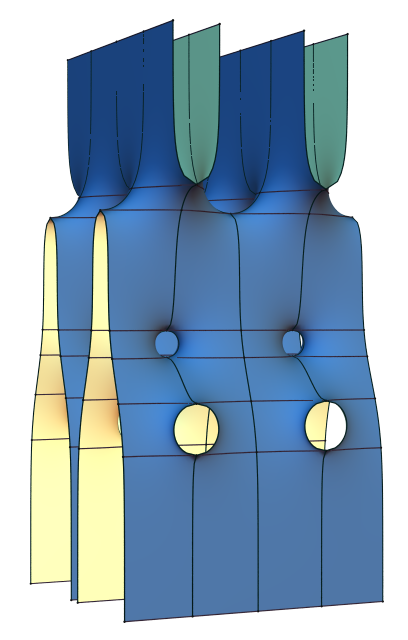}
        \caption{{$(a_1,a_2,a_3,a_4,a_5)=\\ (.0001,.0009,.006,.02,10)$}}
        \label{figure:scherkex3}
    \end{subfigure}
    ~ 
        \begin{subfigure}[b]{0.48\textwidth}
    	    \centering
        \includegraphics[width=.7\textwidth]{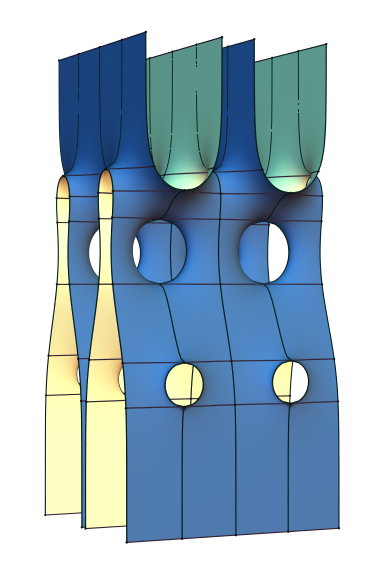}
        \caption{{$(a_1,a_2,a_3,a_4,a_5)=\\(.00001,.0001,.006,.2,.8)$}}
        \label{figure:scherkex4}
    \end{subfigure}
    \caption{Genus two harmonic Scherk's surfaces with handles between every other pair of ends}\label{figure:g2scherkb}
\end{figure}

\section{Harmonic tori}
\label{sec6}
If we restrict to harmonic tori then there is a great deal more flexibility for the placement of the handle, and we don't need to restrict to examples with two orthogonal symmetry planes as in the previous section.  We can, for example, construct a harmonic torus with one end of type $(2,2,4)$.  The example in figure \ref{figure:g0(2,2,4)} has a single end at $\infty$ of type $(2,2,4)$ and Weierstrass data $(\omega_1,\omega_2,\omega_3)=(1,-i,z^2)$.

Consider the rectangular torus $\Lambda$ spanned by $1$ and $\tau$.  Theta functions on $\Lambda$ of the form 
\[
\theta(z)=\sum_{n=-\infty}^{\infty}e^{\pi i(n_1/2)^2\tau+2\pi i(n+1/2)(z+1/2)}
\]
can be used to construct meromorphic one-forms on $\Lambda$.  As shown in \cite{we5} and \cite{wey1};
\begin{lemma}
Let $a_i,b_i\in\C,\; i=1,\ldots,n$.  Then
\[
h(z)=\prod_{i=1}^n\frac{\theta(z-a_i)^{\alpha_i}}{\theta(z-b_i)^{\beta_i}}
\]
has a zero of order $\alpha_i$ at $a_i$, a pole of order $\beta_k$ at $b_i$, and satisfies
\[
\begin{split}
h(z+1)&=(-1)^{\sum(\alpha_i-\beta_i)}f(z),\\
h(z+\tau)&=e^{2\pi i\sum(\alpha_ia_i-\beta_ib_i)}f(z).
\end{split}
\]
\end{lemma}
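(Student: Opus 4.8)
The plan is to reduce everything to the quasi-periodicity and zero structure of the single factor $\theta$, after which both assertions about $h$ follow by multiplicativity. First I would record the two transformation laws for $\theta$ itself directly from the series. Under $z\mapsto z+1$ the summand of index $n$ is multiplied by $e^{2\pi i(n+1/2)}=-1$ independently of $n$, so $\theta(z+1)=-\theta(z)$. Under $z\mapsto z+\tau$ I would complete the square: the extra contribution $2\pi i(n+1/2)\tau$ merges with $\pi i(n+1/2)^2\tau$ into $\pi i(n+3/2)^2\tau-\pi i\tau$, and shifting the summation index by one restores the original quadratic term while turning the linear term $2\pi i(n+1/2)(z+1/2)$ into itself minus $2\pi i(z+1/2)$; collecting the index-free factors gives $\theta(z+\tau)=-e^{-\pi i\tau-2\pi iz}\theta(z)$.

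Next I would pin down the divisor of $\theta$. Reindexing $n\mapsto -n-1$ shows $\theta(-z)=-\theta(z)$, so $\theta$ is odd and $\theta(0)=0$. To see this zero is simple and is the only one modulo $\Lambda$, I would integrate $\theta'/\theta$ around a fundamental parallelogram: from the two laws above the logarithmic derivative is genuinely $1$-periodic and changes by the constant $-2\pi i$ under $z\mapsto z+\tau$, so opposite sides pair off and $\frac{1}{2\pi i}\oint \theta'/\theta\,dz=1$. Hence $\theta$ has exactly one zero per period, necessarily the simple zero at the origin. Translating, $\theta(z-a_i)$ has a single simple zero at $a_i$ and $\theta(z-b_i)$ a single simple zero at $b_i$, so from the product defining $h$ we read off a zero of order $\alpha_i$ at $a_i$ and a pole of order $\beta_i$ at $b_i$.

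Finally I would assemble the multipliers for $h$ factor by factor. Under $z\mapsto z+1$ each numerator factor acquires $(-1)^{\alpha_i}$ and each denominator factor $(-1)^{\beta_i}$, giving $h(z+1)=(-1)^{\sum(\alpha_i-\beta_i)}h(z)$. Under $z\mapsto z+\tau$ each $\theta(z-a_i)$ contributes $-e^{-\pi i\tau-2\pi i(z-a_i)}$ and each $\theta(z-b_i)$ the analogous factor; expanding $-2\pi i(z-a_i)=-2\pi iz+2\pi ia_i$ and collecting the purely constant parts yields the announced $e^{2\pi i\sum(\alpha_i a_i-\beta_i b_i)}$, while the leftover sign together with the $z$- and $\tau$-linear exponentials assembles into $(-1)^{\sum(\alpha_i-\beta_i)}e^{-\pi i(\tau+2z)\sum(\alpha_i-\beta_i)}$. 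This residual factor is trivial exactly when $\sum\alpha_i=\sum\beta_i$, i.e.\ when the divisor has degree zero, which is the normalization under which $h$ descends to $\Lambda$ and recovers the clean formula in the statement. I expect the genuine obstacle to be the zero-counting step, since the reindexings are mechanical; getting precisely one simple zero requires using the two quasi-periodicities carefully in the contour integral, and it is this count that justifies reading the orders of zeros and poles straight off the exponents $\alpha_i,\beta_i$.
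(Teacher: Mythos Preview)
Your argument is correct and is the standard route: establish $\theta(z+1)=-\theta(z)$ and $\theta(z+\tau)=-e^{-\pi i\tau-2\pi iz}\theta(z)$ from the series, use oddness plus the contour integral of $\theta'/\theta$ to show $\theta$ has a unique simple zero at the lattice points, and then read off the divisor and multipliers of $h$ factor by factor. Each of those steps checks out as you wrote them.

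There is nothing to compare against here, because the paper does not supply its own proof of this lemma: it simply cites \cite{we5} and \cite{wey1} and records the statement. So your proposal is strictly more than what the paper does. Your observation that the clean $\tau$-multiplier $e^{2\pi i\sum(\alpha_i a_i-\beta_i b_i)}$ only emerges under the degree-zero normalization $\sum\alpha_i=\sum\beta_i$ is a genuine point the lemma as stated glosses over (and the $f(z)$ on the right-hand side is evidently a typo for $h(z)$); in every application in Section~\ref{sec6} the numerator and denominator degrees do match, so the statement is being used only in that regime.
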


We can use this lemma to construct $\omega_1,\omega_2$, and $\omega_3$ on any torus corresponding to the Weierstrass data of an embedded harmonic tori with one end of type $(2,2,4)$.  The one-form $dz$ is holomorphic on $\Lambda$, and as discussed in section \ref{ssecperiod}, it can be used to solve the period problem.

\begin{figure}[h]
	\centerline{ 
		\includegraphics[height=2.5in]{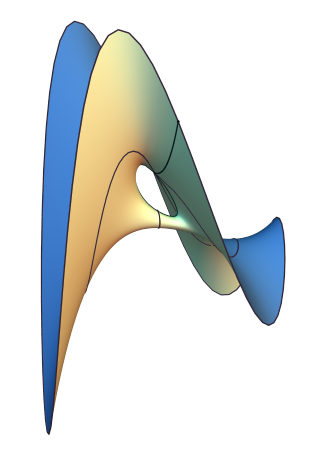}
		\hspace{.25in}
		\includegraphics[height=2.5in]{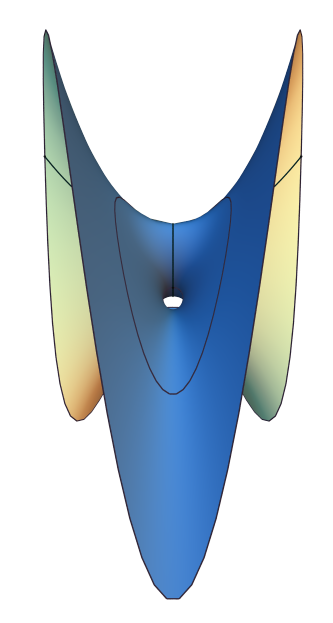}
			}
	\caption{Two views of a surface on $\Lambda$ with a $(2,2,4)$ end at $z=1/2$}
	\label{figure:g1(2,2,4)}
\end{figure}

\begin{theorem}
Let $\Lambda$ be the rectangular torus spanned by $1$ and $\tau=1.2i$.  There is a harmonic torus on $\Lambda$ with one end of type $(2,2,4)$ at $z=1/2$.  See figure \ref{figure:g1(2,2,4)}.
\end{theorem}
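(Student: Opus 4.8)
The plan is to imitate the genus-zero prototype $(\omega_1,\omega_2,\omega_3)=(1,-i,z^2)\,dz$ directly on $\Lambda$: I would use the lemma to manufacture elliptic functions with a single pole at $z=1/2$ of the orders $2,2,4$ dictated by that end, and then close the periods with the holomorphic form $dz$. Concretely, for $\omega_1$ I would take the theta quotient $h_1(z)=\theta(z-p_1)\theta(z-p_2)/\theta(z-1/2)^2$, which has a double pole at $z=1/2$ and simple zeros at $p_1,p_2$; by the lemma $h_1$ descends to $\Lambda$ precisely when $(1+1)-2$ is even and $p_1+p_2-2\cdot\tfrac12\in\Z$, so it suffices to take $p_1+p_2\in\Z$. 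I would set $\omega_2=-i\,\omega_1$, copying the leading coefficients $1,-i$ of the prototype, and build $\omega_3=h_3(z)\,dz$ with an order-four pole at $z=1/2$ from four theta factors $h_3(z)=\prod_{i=1}^4\theta(z-r_i)/\theta(z-1/2)^4$ with $\sum_i r_i\in\Z$.

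Next I would check the end is genuinely of type $(2,2,4)$. In the local coordinate $u=z-\tfrac12$ the forms expand as $\omega_1=(c\,u^{-2}+\cdots)\,du$, $\omega_2=(-ic\,u^{-2}+\cdots)\,du$, and $\omega_3=(d\,u^{-4}+\cdots)\,du$ with $c,d\neq 0$. Since the order-two leading coefficients $c$ and $-ic$ are $\R$-linearly independent, no real combination $a\omega_1+b\omega_2$ has a pole of order less than two, and the order-four term occurs only in $\omega_3$ and cannot be cancelled by the order-two forms; hence the reduced triple is $(2,2,4)$. In particular $n_2=2\neq 4=n_3$, so the Gauss map does not extend to the puncture, exactly as for the graph in figure \ref{figure:g0(2,2,4)}. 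I would also note that each $\omega_k$ has a single pole on the compact surface $\Lambda$, so the sum of its residues---just the residue at $z=1/2$---is zero; the period of $\omega_k$ around the puncture is $2\pi i$ times this residue and therefore vanishes.

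It then remains only to kill $\re\int_A\omega_k$ and $\re\int_B\omega_k$ on the two homology cycles, where $\int_A dz=1$ and $\int_B dz=\tau=1.2i$. Adding $c_k\,dz$ to $\omega_k$ shifts these two periods by $c_k$ and $c_k\tau$, and since $\tau$ is purely imaginary the real-linear map $c_k\mapsto(\re c_k,\ \re(c_k\tau))=(\re c_k,\,-1.2\,\im c_k)$ has nonzero determinant; I can therefore solve uniquely for each $c_k$ and close every real period. Because $dz$ is holomorphic the pole orders, and hence the type, are unchanged, so $f(z)=\re\int^z(\omega_1,\omega_2,\omega_3)$ is a well-defined harmonic map on $\Lambda\setminus\{1/2\}$ with a single $(2,2,4)$ end.

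The genuine difficulty is embeddedness, not existence: the forms, the type computation, and the period closing are all forced by the lemma and the rectangular lattice, whereas the free parameters $p_1,p_2,r_1,\dots,r_4$ (equivalently the position and direction of the handle) must be arranged by hand so that $f$ is an unbranched immersion with no self-intersections, both across the handle and out along the end. I would verify the immersion condition by checking that $\re(\omega_1,\omega_2,\omega_3)$ and $\im(\omega_1,\omega_2,\omega_3)$ are everywhere $\R$-linearly independent, and then exhibit one explicit parameter choice for which the image is embedded, confirming this numerically and displaying it as in figure \ref{figure:g1(2,2,4)}.
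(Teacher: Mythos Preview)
Your proposal is correct and follows essentially the paper's own route: theta quotients from the lemma give $\omega_1,\omega_2,\omega_3$ with pole orders $2,2,4$ at $z=1/2$ (the paper takes $\omega_2=i\,\omega_1$ with zeros at $1/2\pm\tau/2$, and writes $\omega_3$ as a sum of three theta quotients of pole orders $4,3,2$ rather than a single order-four quotient, the lower-order terms being inserted precisely for embeddedness), and multiples of $dz$ close the $A$- and $B$-periods exactly because $\tau$ is purely imaginary. You are in fact more explicit than the paper on the ellipticity constraint $p_1+p_2\in\Z$, the $(2,2,4)$ type verification, the vanishing residue at the puncture, and the invertibility of the period map, and you correctly isolate embeddedness as the one step that must be handled by an explicit parameter choice---which the paper, too, settles by exhibiting the figure rather than by proof.
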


\begin{proof}
An end of type $(2,2,4)$ at $z=1/2$ is given by the one-forms
\[
\begin{split}
\frac{\omega_1}{dz}&=\frac{\theta(z-.5-.6i)\theta(z-.5+.6i)}{\theta(z-.5)^2}\\
\frac{\omega_2}{dz}&=\frac{i\theta(z-.5-.6i)\theta(z-.5+.6i)}{\theta(z-.5)^2}\\
\frac{\omega_3}{dz}&=\frac{i\theta(z-1-.6i)\theta(z+.6i)\theta(z)^2}{\theta(z-.5)^4}+\frac{i\theta(z-.5-.6i)\theta(z+.6i)\theta(z)}{\theta(z-.5)^3}-\frac{i\theta(z-.5-.6i)\theta(z-.5+.6i)}{\theta(z-.5)^2}.
\end{split}
\]
The higher order terms were added to $\omega_3$ to ensure the surface is embedded.  In order to solve the period problem, let 
\[
(\alpha_k,\beta_k)=\left(\re\int_{.3i}^{1+.3i}\omega_k,\im\int_0^{1.2i}\frac{\omega_k}{1.2i}\right),\, k=1,2,3.
\]
Then, 
\[
\begin{split}
\tilde{\omega}_1&=\omega_1+(\alpha_1+i\beta_1)dz\\
\tilde{\omega}_2&=\omega_2+(\alpha_2+i\beta_2)dz\\
\tilde{\omega}_3&=\omega_3+(\alpha_3+i\beta_3)dz
\end{split}
\] 
is the Weierstrass representation for a harmonic torus on $\Lambda$ with an end of type $(2,2,4)$ at $z=1/2$.
\end{proof}

Shifting the end from $z=1/2$ to $z=1/3$ slightly deforms the surface.  In this case, we don't need to add any higher order terms to $\omega_3$ to ensure the surface is embedded.

\begin{theorem}
Let $\Lambda$ be the rectangular torus spanned by $1$ and $\tau=1.2i$.  There is a harmonic torus on $\Lambda$ with one end of type $(2,2,4)$ at $z=1/3$.  See figure \ref{figure:g1(2,2,4)b}.
\end{theorem}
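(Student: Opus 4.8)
The plan is to mirror the proof of the preceding theorem, moving the end from $z=1/2$ to $z=1/3$ and, as the remark before the statement promises, dispensing with the corrective higher-order theta terms in $\omega_3$. First I would write down candidate one-forms as theta quotients furnished by the Lemma, taking $\omega_1/dz$ and $\omega_2/dz$ to have a double pole at $z=1/3$ and $\omega_3/dz$ a single pole of order four there:
\[
\frac{\omega_1}{dz}=\frac{\theta(z-1/3-0.6i)\,\theta(z-1/3+0.6i)}{\theta(z-1/3)^2},\qquad \frac{\omega_2}{dz}=i\,\frac{\omega_1}{dz},
\]
with $\omega_3/dz$ a quotient whose only pole is of order four at $1/3$ and whose four zeros $c_1,c_2,c_3,c_4$ are chosen to match that pole.

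The first verification is that each $\omega_k/dz$ is a genuine meromorphic (elliptic) function on $\Lambda$, i.e. that the two quasi-periodicity factors of the Lemma are trivial. For $\omega_1,\omega_2$ this is the count $\sum(\alpha_i-\beta_i)=(1+1)-2=0$, making $h(z+1)=h(z)$, together with Abel's relation $\sum(\alpha_i a_i-\beta_i b_i)\in\Z$; since the zeros $1/3\pm0.6i$ sum to $2/3=2\cdot(1/3)$, both hold. For $\omega_3$ I would pick the four zeros so that $c_1+c_2+c_3+c_4\equiv 4\cdot(1/3)\pmod{\Lambda}$, which is exactly the condition forcing the $\theta(z+\tau)$ factor to equal $1$, while the four zeros against the order-four pole give $\sum(\alpha_i-\beta_i)=0$. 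With these choices $\omega_1,\omega_2,\omega_3$ are one-forms on $\Lambda$ with a single puncture at $1/3$, and the end type is read off from the pole orders: $\omega_1,\omega_2$ have double poles and $\omega_3$ a pole of order four. Because the leading Laurent coefficients of $\omega_1$ and $\omega_2$ stand in ratio $1:i$, no real combination $a\omega_1+b\omega_2$ with $(a,b)\neq 0$ lowers the order-two pole, and the order-four pole of $\omega_3$ cannot be reduced by adding multiples of the lower-order $\omega_1,\omega_2$; hence the end is genuinely of reduced type $(2,2,4)$.

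It remains to solve the period problem. Since $\Lambda$ has genus one, the holomorphic one-forms are one-dimensional, spanned by $dz$, so I would add a single complex multiple $c_k\,dz$ to each $\omega_k$ exactly as in the previous proof, setting
\[
(\alpha_k,\beta_k)=\left(\re\int_{0.3i}^{1+0.3i}\omega_k,\ \im\int_0^{1.2i}\frac{\omega_k}{1.2i}\right),\qquad \tilde{\omega}_k=\omega_k+(\alpha_k+i\beta_k)\,dz,
\]
where the horizontal contour at height $0.3i$ and the vertical contour at $\re=0$ both avoid the pole at $1/3$. The key point is that the real parts of the horizontal and vertical periods of $dz$ and of $i\,dz$ span $\R^2$ precisely because $\tau=1.2i$ is not real; thus the real-period map $\C\,dz\to\R^2$ is an isomorphism, $c_k$ is uniquely determined, and it kills both real periods of $\omega_k$ without disturbing the pole. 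Then $f(z)=\re\int^z(\tilde\omega_1,\tilde\omega_2,\tilde\omega_3)$ is well defined on $\Lambda$ minus the puncture and is the desired harmonic torus.

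The step I expect to be the genuine content, rather than a routine transcription of the $z=1/2$ argument, is confirming that the bare $\omega_3$ (with no added higher-order theta terms) already produces an embedded end at $1/3$. This requires examining the dominant $(2,2,4)$ model of the surface near the puncture and checking that its sheets do not cross, which is the feature special to the location $1/3$ and is what the remark preceding the theorem and figure \ref{figure:g1(2,2,4)b} assert.
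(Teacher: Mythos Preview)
Your proposal follows essentially the same route as the paper: write $\omega_1,\omega_2$ as the theta quotients with a double pole at $1/3$, take $\omega_3$ with an order-four pole there, and kill the two real periods by adding the unique complex multiple of $dz$. The only cosmetic difference is that the paper commits to an explicit $\omega_3$ (numerator $i\,\theta(z-1-0.6i)\,\theta(z-1/3+0.6i)\,\theta(z)^2$) rather than leaving the four zeros abstract as you do; and, like your sketch, the paper's proof does not actually carry out an embeddedness verification---the theorem as stated only claims existence of the harmonic torus with the given end type, so the concern you flag in your final paragraph is one the paper itself defers to the figure.
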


\begin{figure}[h]
	\centerline{ 
		\includegraphics[height=2.5in]{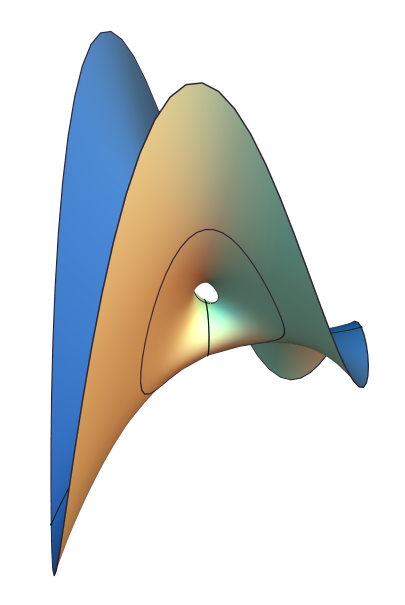}
		\hspace{.25in}
		\includegraphics[height=2.5in]{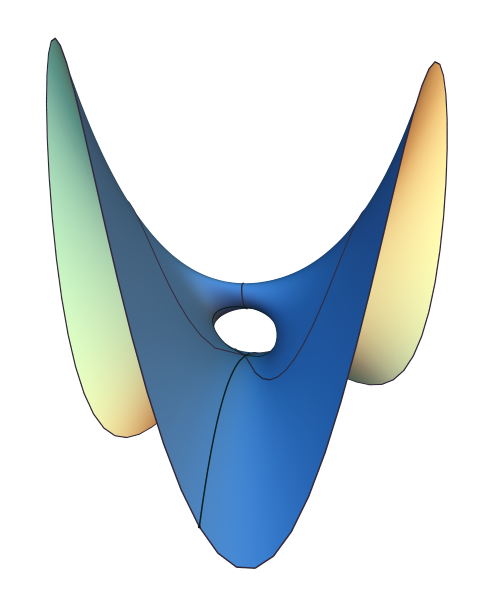}
			}
	\caption{Two views of a surface on $\Lambda$ with a $(2,2,4)$ end at $z=1/3$}
	\label{figure:g1(2,2,4)b}
\end{figure}

\begin{proof}
Let $\tau=1.2i$.  Place the end at $z=1/3$.  Let
\[
\begin{split}
\frac{\omega_1}{dz}&=\frac{\theta(z-1/3-.6i)\theta(z-1/3+.6i)}{\theta(z-1/3)^2}\\
\frac{\omega_2}{dz}&=\frac{i\theta(z-1/3-.6i)\theta(z-1/3+.6i)}{\theta(z-1/3)^2}\\
\frac{\omega_3}{dz}&=\frac{i\theta(z-1-.6i)\theta(z-1/3+.6i)\theta(z)^2}{\theta(z-.5)^4}.
\end{split}
\]
In order to solve the period problem, let 
\[
(\alpha_k,\beta_k)=\left(\re\int_{.3i}^{1+.3i}\omega_k,\im\int_0^{1.2i}\frac{\omega_k}{1.2i}\right),\, k=1,2,3.
\]
Then,
\[
\begin{split}
\tilde{\omega}_1&=\omega_1+(\alpha_1+i\beta_1)dz\\
\tilde{\omega}_2&=\omega_2+(\alpha_2+i\beta_2)dz\\
\tilde{\omega}_3&=\omega_3+(\alpha_3+i\beta_3)dz
\end{split}
\]
is the Weierstrass representation for a harmonic torus on $\Lambda$ with an end of type $(2,2,4)$ at $z=1/3$.
\end{proof}

    
\addcontentsline{toc}{section}{References}
\bibliographystyle{plain}
\bibliography{references}

\label{sec:liter}

%
\aua                                                        
    Peter Connor\\                                 
    Department of  Mathematical Sciences, Indiana University South Bend\\ 
    1700 Mishawaka Ave, South Bend, IN, 46634, USA\\        
    E-mail: pconnor@iusb.edu                           
%
%
\end{document}